\documentclass[11pt]{article}
\usepackage{mylatexsetting}

\title{An Optimal Selection Problem Associated with the Poisson Process}
\author{Zakaria Derbazi \\{\it\small Queen Mary University of London}}
\date{\today}
\begin{document}
	\maketitle
	\noindent
	\begin{abstract}%
Cowan and Zabczyk (1978) introduced a continuous-time generalisation of the secretary problem, where offers arrive at epochs of a homogeneous Poisson process. We expand their work to encompass the last-success problem under the Karamata-Stirling success profile. In this setting, the $k$th trial is a success with probability $p_k=\theta/(\theta+k-1)$, where $\theta > 0$. In the best-choice setting ($\theta=1$), the myopic strategy is optimal, and the proof hinges on verifying the monotonicity of certain critical roots. We extend this crucial result to the last-success case by exploiting a connection to the sign of the derivative in the first parameter of a quotient of Kummer's hypergeometric functions. Additionally, we establish an Edmundson-Madansky inequality applicable to Poisson random variables. This result enables us to adopt a probabilistic approach to derive bounds and asymptotics of the critical roots. This strengthens and improves the findings of Ciesielski and Zabczyk (1979).
	\end{abstract}

	\section{Introduction}
	Cowan and Zabczyk (1978)  considered a continuous-time extension of the classical secretary problem. In this scenario, the task is to select the best apartment within a fixed time horizon, $T$. Apartments are presented at epochs of a homogeneous Poisson process with a known intensity $\lambda > 0$. The decision-maker may accept or discard each presented apartment irrevocably. Each arrival can be ranked relative to previously seen apartments, and all permutations of ranks are assumed equally likely and independent of the arrival process.
	
	The optimal strategy selects the apartment  presented at time $t$, whose index is $k$, provided that the following condition is satisfied
	\begin{alignat}{2}\label{cutoff}
		\lambda (T-t) \le a_k.
	\end{alignat}
	Each \textit{cutoff} $a_k$ ($k=1, 2, \ldots$) is the unique solution to the equation $h_k(x) = 0$, where $h_k(x)$ is formed by the difference between the probability of zero successes and exactly one success and is given by
	\begin{alignat}{2}\label{eqn.cz}
		h_k(x) \coloneqq \frac{1}{k} + \sum_{n=1}^\infty \frac{x^k}{n!(k+n)} \left(1-\sum_{j=1}^n \frac{1}{ j + k-1} \right).
	\end{alignat}
	For a strategy to be optimal, the sequence of roots $(a_k)_{k \ge 0}$ must be increasing. Cowan and Zabczyk developed an ingenious argument to establish this property. They begin by establishing the uniqueness of the roots,  leveraging the continuity of the function $h_k$ and all of its derivatives, along with the behaviour of the first and second derivatives,  in the vicinity of the root $a_k$. Subsequently, they exploit a relationship between the change in probability of exactly one success across consecutive trials and the difference between the probabilities of one and zero successes to prove monotonicity (see p. 588--590 in \cite{CZ}).

	Cowan and Zabczyk's optimal selection model can be categorised as a best-choice problem with a random number of observations, denoted  $N$, with an a priori Poisson distribution. These observations arrive uniformly and independently at times in $[0,T]$. 
	This formulation might appear similar to the best-choice problem introduced by Presman and Sonin (1972), in which $N$ is a Poisson-distributed random variable. However,  the key distinction lies in the role of time as a crucial decision factor \cite{ CZ, BrussCZ, Szajowski15, GD2}. Consequently, Cowan and Zabczyk's model is more accurately characterised as a best-choice problem with random observation times, where the number of items to be inspected has a Poisson prior distribution.
	
	We aim to extend this framework to encompass last-success problems, where the decision-maker only observes a sequence of indicator variables. In this setting, the defining characteristic of the trials (and, consequently, the problem itself) is the sequence of success probabilities, which we refer to as the \textit{success} profile or \textit{record} profile. In the classical best-choice problem, the profile is given by the success probability $p_k=1/k$  for the $k$th trial. 
	
	Our investigation is closely related to the theory of random records \cite{Gaver, Nevzorov}. Gaver (1976) played a pivotal role in linking this theory to optimal stopping problems. He examined the embedding of record times within a point process and analysed the problem of selecting the last record appearing between epochs of a Poisson process. Bruss (1988) further highlighted the strong relationship between optimal selection problems and optimal stopping related to record sequences. However, the initial work on last-success problems is attributed to Pfeifer (1989). He examined a sequence of record indicators and addressed the problem of selecting the last success within that sequence. For best-choice problems with a random number of trials,  a brief overview of recent advancements is provided in \cite{GD2}.
	
	A common strategy for solving last-success problems involves checking whether they fall within the \textit{monotone case} of optimal stopping theory \cite{CRS, FergusonBook}. Should this criterion be met, the optimal strategy is deduced via a nonincreasing sequence of cutoffs. These cutoffs, indexed by the number of arrivals, represent the earliest instant at which accepting the current offer (record) becomes optimal. The optimality of the myopic strategy is ensured whenever the sequence of cutoffs is nonincreasing.  
	
	Cowan and Zabczyk's problem has led to further research on optimal selection related to Poisson arrivals. Bruss (1987) broadened the framework to accommodate inhomogeneous Poisson processes. Ano and Ando (2000) considered a random availability attached to each offer. Schwalko and Szajowski (2003) explored the variant of the problem of selecting the best or second-best apartment. Subsequently, Szajowski (2007) introduced a game-theoretic version of the problem, thus offering a distinct perspective on the optimal selection strategy. Considering the last-success version of the problem, Hofmann (1997) investigated Poisson arrivals with successes appearing according to Nevzorov's $F^\alpha$ record profile. In this setting, success probabilities are given by $p_k = \alpha_k / (\alpha_1 + \cdots + \alpha_k)$ for some positive $\alpha's$. However,  Hofmann fell short of demonstrating the crucial property of monotonicity of the roots. In scenarios where trials arrive based on an inhomogeneous Poisson process, the instances of stopping at the last success, the last $\ell$th success, and any of the last $\ell$ successes were studied by Bruss (2000), Bruss and Paindaveine (2000), and Tamaki (2011) respectively.

	Using the Karamata-Stirling profile to model the success probabilities in our analysis is motivated by two key factors. First, it relates to the well-known Ewens Sampling Formula (ESF)  \cite{Crane}. The number of distinct components within the ESF framework can be expressed as a sum of independent indicator variables. Each of these variables follows the Karamata-Stirling profile, where the probability of success in the $k$thth trial is given by
	\begin{equation}\label{profile}
		p_k=\frac{\theta}{(\theta+k-1)}\,, ~~~k\geq 1,
	\end{equation}
	with  \textit{mutation} parameter     $\theta>0$. The case $\theta=1$ corresponds to the classical best-choice problem. 
	Second, the generating function of the number of successes in $n$ Bernoulli trials under the Karamata-Stirling profile exhibits advantageous analytical properties. This characteristic allows us to strengthen and simplify the solution presented by Cowan and Zabczyk while revealing a connection between last-success problems with random observation times and hypergeometric functions. In this regard, we rely on Kummer's confluent hypergeometric function properties to demonstrate that the sequence of cutoffs $(a_k)$  is increasing. This approach aligns with the recent work of \cite{GD1, GD2}, who employed the Gaussian hypergeometric function to analyse other power series distributions.

	The paper is structured as follows. Section 2 introduces the fundamental model based on a generic prior for the number of observations and a generic success profile. Subsequently, it derives the necessary condition for the optimality of the myopic strategy. Section 3 presents new findings related to Kummer's confluent hypergeometric function, focusing specifically on deriving an expression to determine the sign of the derivative in parameter of a quotient of Kummer's functions. Section 4 establishes the core optimality result, demonstrating the monotonicity of the cutoffs through a connection to the sign of the derivative in the first parameter of a quotient of Kummer's functions. Section 5 transitions to the scenario where observations are revealed at epochs of an inhomogeneous Poisson process. The optimal rule in the last-success problem is similar to the best-choice setting addressed by Bruss (1987), where arrival intensities are linked to the critical root. In the case of a continuous-time success probability function, the problem reduces to selecting the last arrival in an inhomogeneous Poisson process. This section concludes with an illustrative example that considers a scaling limit of the Karamata-Stirling profile as an example of a continuous-time success probability function. Subsequently,  the optimal threshold and the classical optimal winning probability are derived. Finally, Section 6 generalises the two theorems of Ciesielski and Zabczyk (1979) to encompass the Karamata-Stirling profile. We establish an Edmundson-Madansky inequality adapted to Poisson random variables using a coupling method and appropriate truncation. This crucial result enables us to refine the bounds on the roots and simplify the derivation of the asymptotic values.	
	\section{The Mathematical Model}
	
	Let $\left\{T_n, X_n\right\}_{n \geq 1}$ denote a marked Poisson process defined on a probability space $(\Omega, {\cal F}, \proba)$, where $T_1, T_2, \ldots$ are the chronologically ordered arrival epochs. The marks $\left\{X_n\right\}_{n \geq 1}$ form an i.i.d. sequence of Bernoulli random variables  independent of $\left\{T_n\right\}_{n \geq 1}$ having a success profile $\profile\coloneqq \left(p_n\right)_{n \ge 1}$, that is $\proba(X_n = 1) = p_n$.  Without loss of generality, assume that $0 < p_k < 1$ for all $k\ge1$.
	
	The bivariate process $\left\{T_n, X_n\right\}_{n \geq 1}$  is adapted to the natural filtration $({\cal F}_t)_{t\in[0,1]} \subseteq {\cal F}$, where the $\sigma$-algebra ${\cal F}_t$ encapsulates the complete knowledge of epochs and outcomes within the time interval $[0,t]$.  The random record process of points  $T_n$  at which record marks occur ($X_n=1$) can be regarded as a thinning of $\{T_n\}_{n \ge 1}$, where each mark $T_n$ is independently retained with probability $p_n$ \cite{BrowneBunge}. 
	
	Another way to model this process is to consider the marks $X_1, X_2, \ldots$ as a sequence of trials,  whose number $N$ is Poisson-distributed.  
	The number of successes among trials $k, k+1,\cdots,n$ has a  Poisson-binomial distribution with probability generating function
	$$z \mapsto \prod_{j=k}^n (1-p_j+zp_j).$$
	A simple computation gives the probability of zero successes and the probability of one success from stage $k \ge 1$ as
	\begin{alignat}{2}
		s_0(k, n)&\coloneqq\prod_{j=k}^n (1-p_j),~~~~k<n \label{s0},\\
		s_1(k, n)&\coloneqq s_0(k,n)\sum_{j=k}^n \frac{p_j}{1-p_j},  ~~~~~k < n \label{s0sm}.
	\end{alignat}
	
	Without loss of generality, we apply a deterministic time-change to transform arrivals from $[0, T]$ to $[0,1]$. Furthermore, given $N=n$, the trials arrive in continuous time at independent, uniformly distributed epochs $U_1, U_2, \cdots, U_n$ in [0,1].  We order these arrivals chronologically so each arrival epoch $T_k$ of the Poisson process in the original setting corresponds to the $k$th order statistic of $U_1,\cdots, U_n$.  We associate with each $T_k$, a trial $X_k$ resulting in success with probability $p_k$, independently of anything else, and we let $T_n$ undefined on the event $\{N<n\}$. The number of arrivals up to time $t$ is a mixed binomial process defined as
	$$
	N_t=\sum_{n=1}^{N}  \1{\{U_n\leq t\}}, ~~~t\in [0,1].
	$$

	We introduce the state variable $(t,k)\in [0,1]\times\{0,1,\cdots\}$ to represent the number of arrivals observed up to any epoch $t$ and denote the occurrence of exactly $k$ arrivals up to time $t$ by the event $\{N_t=k\}$. When the $k$th trial occurs at time $T_k = t$ and the outcome is a success, we designate a transition to this {\em success} state as $(t,k)^\circ$. 
	
	By  independence of the trials and the
	Markov property of  mixed binomial processes \cite{Kallenberg},
	the distribution of the point process of epochs and outcomes of the trials on $(t,1]$ both depend on ${\cal F}_t$ only through $N_t$.

	Let $\prior=(\pi_1, \pi_2, \ldots)$ be the prior distribution of the number of trials, where $\pi_n = \proba(N=n)$. The posterior distribution  factors as 
	\begin{alignat}{2}\label{posterior}
		\proba(N=k+j\,|\,N_t=k)&=\frac{\displaystyle\binom{k+j}{k}\pi_{k+j}(1-t)^jt^k}{\displaystyle\sum_{n=k}^\infty\binom{n}{k}\pi_{n}(1-t)^{n-k}t^k}\nonumberj
		&=f_k(t) {k+j\choose k} \pi_{k+j} (1-t)^j,~~~j\geq 0,
	\end{alignat}
	where $$f_k(t)= \left(\sum_{n=k}^\infty\binom{n}{k}\pi_{n}(1-t)^{n-k}\right)^{-1}$$ is a normalisation function.
	By (\ref{posterior}), the conditional probability that there are no successes following state  $(t,k)$ is given by
	\begin{equation}\label{S0}
		{\cal S}_0(t,k):=f_k(t) \sum_{j=0}^\infty \pi_{k+j} {k+j\choose k}  (1-t)^j s_0(k+1, k+j),
	\end{equation}
	which is the {\it adapted} reward  from stopping at a success state $(t,k)^\circ$.
	Analogously, the probability of exactly one success following state $(t,k)$  is
	\begin{equation}\label{S1}
		{\cal S}_1(t,k):=f_k(t) \sum_{j=m}^\infty \pi_{k+j} {k+j\choose k}  (1-t)^j s_1(k+1, k+j).
	\end{equation}
	Similarly, this probability represents the adapted reward from stopping at the next success state   $(t,k)^\circ$.

	The following result recalls some properties of ${\cal S}_1$ and  ${\cal S}_0$ that prove instrumental in the latter sections.
	\begin{lemma}[Lemma 2 in  \cite{GD2}] \label{lem.myopic} 
		For fixed $k\geq1$, suppose $\pi_k>0$. Then 
		\begin{itemize}
			\item[\rm(i)]  ${\cal S}_0(1,k)=1, ~{\cal S}_1(1,k)=0$,
			\item[\rm(ii)]   ${\cal S}_0(t,k)$ is nondecreasing in $t$, and is strictly increasing   if   $~\sum_{j=k+1}^\infty \pi_j>0$,
			\item[\rm(iii)]   		
			There exists cutoffs $a_{k}\in[0,1)$ such that 				$\sgn \left({\cal S}_1(t,k) - {\cal S}_0(t,k) \right) =\sgn (a_{k}-	t)$ for  $t\in(0,1]$.
		\end{itemize}
	\end{lemma}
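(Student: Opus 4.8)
The plan is to substitute $u=1-t$ and read $\mathcal S_0$ and $\mathcal S_1$ as averages of monotone sequences against a one-parameter family of probability weights, so that all three claims follow from a single stochastic-ordering principle. Set $c_j\coloneqq\binom{k+j}{k}\pi_{k+j}$, $b_j\coloneqq s_0(k+1,k+j)$ and $S_j\coloneqq\sum_{i=k+1}^{k+j}p_i/(1-p_i)$, with the conventions $b_0=1$ and $S_0=0$. By \eqref{s0} each factor $1-p_i$ lies in $(0,1)$, so $(b_j)_{j\ge0}$ is (strictly) decreasing, while $(S_j)_{j\ge0}$ is strictly increasing. Using $s_1(k+1,k+j)=b_jS_j$ from \eqref{s0sm}, the definitions \eqref{S0}--\eqref{S1} rewrite, for $u\in[0,1)$, as
\[
\mathcal S_0(t,k)=\sum_{j\ge0}b_j\,\mu_u(j),\qquad \frac{\mathcal S_1(t,k)}{\mathcal S_0(t,k)}=\sum_{j\ge0}S_j\,\nu_u(j),
\]
where $\mu_u(j)\propto c_ju^j$ and $\nu_u(j)\propto c_jb_ju^j$ are probability weights on $\{0,1,\dots\}$ (well defined wherever the defining series converge, as for the Poisson prior), and I denote the second right-hand side by $R(u)$. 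Claim (i) is then immediate: at $u=0$ only the $j=0$ term survives, and since $b_0=1$, $S_0=0$ this gives $\mathcal S_0(1,k)=1$ and $\mathcal S_1(1,k)=0$.

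For (ii) I would use that $(\mu_u)_u$ is increasing in the likelihood-ratio order: for $u'>u$ the ratio $\mu_{u'}(j)/\mu_u(j)\propto(u'/u)^j$ is nondecreasing in $j$, hence $\mu_{u'}$ stochastically dominates $\mu_u$. As $b_j$ is nonincreasing in $j$, the average $\mathcal S_0(t,k)=\sum_j b_j\,\mu_u(j)$ is nonincreasing in $u$, i.e.\ nondecreasing in $t$. When $\sum_{j\ge k+1}\pi_j>0$ the support of $\mu_u$ contains $0$ (as $\pi_k>0$) and some $j\ge1$, on which $b$ is strictly smaller than $b_0=1$; the dominance is then strict and $\mathcal S_0(t,k)$ is strictly increasing on $(0,1)$.

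For (iii) the same principle applied to $(\nu_u)_u$, which is likewise increasing in the likelihood-ratio order, together with the fact that $S_j$ is strictly increasing in $j$, shows that $R(u)=\sum_j S_j\,\nu_u(j)$ is nondecreasing in $u$, and strictly increasing as soon as $\nu_u$ has at least two atoms (guaranteed by $\pi_k>0$ and $\sum_{j\ge k+1}\pi_j>0$). Since $\mathcal S_0(t,k)>0$, the sign of $\mathcal S_1-\mathcal S_0=\mathcal S_0\,(R-1)$ equals that of $R(u)-1$. As $u\downarrow0$ we have $R\to S_0=0<1$, so $R-1<0$ near $t=1$, consistent with (i). Because $R$ is continuous and nondecreasing on $[0,1)$, $R-1$ changes sign at most once: either $R<1$ throughout $[0,1)$ (put $u^\ast=1$), or there is a unique $u^\ast\in(0,1)$ with $R(u^\ast)=1$ by the intermediate value theorem. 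In all cases $a_k\coloneqq1-u^\ast\in[0,1)$ and $\sgn(\mathcal S_1-\mathcal S_0)=\sgn(R(u)-1)=\sgn(u-u^\ast)=\sgn(a_k-t)$, which is the assertion.

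I expect the main obstacle to be the single-crossing property underlying (iii). At the level of coefficients this is visible, since $c_jb_j(S_j-1)$ changes sign exactly once, from negative to positive, as $S_j$ increases through $1$; but converting a single sign change in the coefficients of a power series into a single sign change of the function itself demands care with convergence on $[0,1)$ and with the limit $t\to0$. The likelihood-ratio argument circumvents this by reducing everything to the monotonicity of the conditional mean $R(u)$, which is the real technical point. The remaining ingredients---continuity of $R$, the intermediate value theorem, and the degenerate cases that force $a_k=0$---are then routine, provided the series defining $\mu_u$ and $\nu_u$ converge on $[0,1)$, as they do for the Poisson prior of interest.
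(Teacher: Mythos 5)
The paper itself offers no proof of this lemma: it is imported verbatim as ``Lemma 2 in \cite{GD2}'', so there is no in-paper argument to compare against. Judged on its own merits, your proof is correct. The reduction to probability weights $\mu_u(j)\propto c_ju^j$ and $\nu_u(j)\propto c_jb_ju^j$ is exactly right (the normalisation is $f_k(t)$, resp.\ $f_k(t)/\mathcal{S}_0$), the likelihood-ratio ordering in $u$ is immediate from the ratio $(u'/u)^j$, and pairing it with the strict monotonicity of $b_j$ (part (ii)) and of $S_j$ (part (iii)) gives both monotonicity statements; the single-crossing property of $\mathcal{S}_1-\mathcal{S}_0=\mathcal{S}_0\,(R-1)$ then follows from continuity and monotonicity of $R$ together with $R(0)=0$, which is precisely what (iii) asserts, including the degenerate case $a_k=0$. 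Two small tidying remarks. First, your hedge about convergence ``as for the Poisson prior'' is unnecessary: for any prior and any $t\in(0,1]$ one has $\binom{k+j}{k}t^k(1-t)^j\le 1$, hence $\sum_j c_j u^j\le t^{-k}<\infty$, and the numerator series are dominated by this one since $s_0,s_1\le 1$; so the lemma holds for general $\prior$ exactly as stated. Second, in the case $u^\ast\in(0,1)$ the identity $\sgn(R(u)-1)=\sgn(u-u^\ast)$ needs $R$ strictly increasing, which you have only when $\nu_u$ carries at least two atoms; you should say explicitly that when $\sum_{j\ge k+1}\pi_j=0$ one has $R\equiv 0$, so that case falls under the ``$R<1$ throughout'' alternative --- your text implies this but does not quite spell it out. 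Neither point is a gap in substance.
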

	
	We define a {\it stopping strategy} to be  a random variable $\tau$ which takes  values in the random set of times $\{T_1,\cdots, T_N, 1\}$ and is adapted to ${\cal F}_t$ for  $t\in[0,1]$, where $\{\tau=1\}$ is interpreted as not stopping at all. Our analysis will focus solely on strategies stopping at success states $(T_k, k)^\circ$,  where the objective is to stop at the last success. For $k\geq 1$, stopping at  $(t,k)^\circ$  is regarded as a win if  $T_k=t$, and no successes occur in $(t,1)$.

	With every `stopping set'  $A_k\subset [0,1)\times{\mathbb N}$ we associate a {\it Markovian} stopping strategy which stops at the first success trial  $(t,k)^\circ$  with  $(t,k)$ falling in  $A_k$.
	The optimal strategy exists for arbitrary $\prior$ and $\profile$. Moreover, it is Markovian by a theorem on the exclusion of randomised stopping strategies (see Theorem 5.3 in \cite{CRS}).

	The {\it myopic} strategy $\tau^*$ is defined as the strategy stopping at the first state $(t,k)^\circ$ satisfying ${\cal S}_1(t,k) \le {\cal S}_0(t,k)$.
	This strategy is Markovian with  stopping set 
	\begin{equation*}
		A:=\bigcup\limits_{k=1}^\infty ([a_k,1)\times \{k\}),
	\end{equation*}
	where  the cutoffs  $(a_k)$ are defined in Lemma \ref{lem.myopic}. Each cutoff $a_k$ corresponds to the earliest time $\tau^*$ can accept a success trial with index $k$. 
	If  $(t,k)\notin A$, then stopping in the state  $(t,k)^\circ$ is not optimal.

	The so-called {\it monotone case}   of  the optimal stopping theory holds if  $A$ is 
	`closed',  meaning that after entering $A$, the sequence of successes  $(t,N_t)^\circ$  does not exit the set (see  \cite{CRS, FergusonBook}).
	This condition is sufficient for the optimality of  $\tau^*$.
	Since the sequence of successes is increasing in both components,
	this property of $A$ is equivalent to the condition
	$a_1\geq a_2\geq\cdots$, characterising the optimality of $\tau^*$.

	\section{Hypergeoemtrics}\label{HGF}
	Recall the definition of Kummer's hypergeometric function 
	\begin{equation}\label{Kummer}
		M(a,c,x):= \sum_{j=0}^\infty \frac{(a)_j }{(c)_j}  \frac{x^j}{j!}\,,
	\end{equation}
	viewed as a function of  real parameters $a, c$  and $x > 0$, where $c > a >0$, its basic integral representation is given by
	\begin{alignat}{2}
		M(a,c,x) \coloneqq \frac{\Gamma(c)}{\Gamma(a)\Gamma(c-a)}\int_{0}^1 e^{xu}u^{a-1}(1-u)^{c-a-1} \dx[u]\label{kummer.integral2},
	\end{alignat}
	where $\Gamma$ is the Gamma function, which is defined by Euler's integral of the second kind  as follows
	\begin{alignat}{2}\label{HGF.gamma}
		\Gamma(x) &\coloneqq \int_0^\infty u^{x-1}e^{-u} \dx[u]  ~~~~\text{for } x > 0.
	\end{alignat}

	Let $D_x$ denote the derivative with respect to the variable (or the parameter) $x$. The following elementary results are foundational for the proofs presented in this section.
	\begin{lemma}\label{lem.monotonicity.log}
		Consider two functions $f: I \to J$ and $g: I \to J$, where $I, J \subseteq (0, \infty)$. If $f$ and $g$ are differentiable,  then it holds that 
		\begin{alignat*}{2}
			\sgn D_x  \log f(x)  &= \sgn D_x f(x),\\[5pt]
			\sgn D_x \left\{f(x) g(x)\right\}  &= \sgn D_x  f(x) ~~~~ \text{ if } \sgn D_x  f(x)  = \sgn D_x  g(x),\\
			\sgn D_x \left\{\frac{f(x)}{g(x)}\right\} &= \sgn D_x  f(x) ~~~~ \text{ if } \sgn D_x  f(x)  = -\sgn D_x  g(x).
		\end{alignat*}
	\end{lemma}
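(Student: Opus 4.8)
The plan is to derive all three identities directly from the elementary rules of differentiation, using only the standing hypothesis that the common codomain $J \subseteq (0,\infty)$ forces $f(x)>0$ and $g(x)>0$ for every $x\in I$. Since the sign of a product or quotient is determined by the signs of its factors, each claim reduces to a short inspection of signs.

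First I would treat the logarithmic identity. By the chain rule, $D_x \log f(x) = f'(x)/f(x)$. Because $f(x)>0$, division by $f(x)$ does not alter the sign, so $\sgn D_x \log f(x) = \sgn f'(x) = \sgn D_x f(x)$, as claimed.

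Next, for the product I would apply the Leibniz rule to obtain $D_x\{f(x)g(x)\} = f'(x)g(x) + f(x)g'(x)$. Under the hypothesis $\sgn D_x f(x) = \sgn D_x g(x) =: s$, the positivity of $f$ and $g$ shows that $f'(x)g(x)$ and $f(x)g'(x)$ both carry the sign $s$; as the sum of two quantities of common sign $s$ again has sign $s$ (the degenerate case $s=0$ forcing both summands to vanish), the sum has sign $s = \sgn D_x f(x)$. For the quotient, the quotient rule gives $D_x\{f(x)/g(x)\} = \left(f'(x)g(x) - f(x)g'(x)\right)/g(x)^2$. The denominator is strictly positive, so the sign of the whole equals that of the numerator; writing $\sgn D_x f(x) = s$ and invoking the hypothesis $\sgn D_x g(x) = -s$, one sees that both $f'(x)g(x)$ and $-f(x)g'(x)$ carry the sign $s$, whence their sum again has sign $s = \sgn D_x f(x)$.

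The only point requiring any care — and hence the closest thing to an obstacle in an otherwise routine argument — is the observation that the sum of two reals of equal sign retains that sign. This would fail for reals of opposite sign, where cancellation can occur; it is precisely to preclude such cancellation that the matching and opposing sign hypotheses are imposed in the second and third identities, so that after invoking $f,g>0$ the two summands reinforce one another rather than compete.
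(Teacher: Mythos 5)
Your proof is correct. The paper takes a slightly different route for the second and third identities: it first establishes the logarithmic identity (exactly as you do), and then reduces the product and quotient cases to it, writing $\sgn D_x \left\{f(x)g(x)\right\} = \sgn D_x \log \left\{f(x)g(x)\right\} = \sgn\left\{D_x \log f(x) + D_x \log g(x)\right\}$, and similarly with a minus sign for the quotient, before finishing with the same no-cancellation observation you make. You instead apply the Leibniz and quotient rules directly, so your proofs of the last two identities do not rely on the first. The two computations are equivalent up to positive factors: the paper's sum $f'/f + g'/g$ is your numerator $f'g + fg'$ divided by the strictly positive quantity $fg$, so the sign analysis is the same in both. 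Neither route buys anything substantive over the other; if anything, your explicit handling of the degenerate case $\sgn D_x f(x) = 0$ (forcing both summands to vanish) spells out a detail the paper leaves to ``a simple sign analysis.''
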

	\begin{proof}
		Given that $f, g$ are strictly positive, it is evident that $$\sgn D_x \log f(x) \coloneqq \sgn \frac{ D_x f(x)}{f(x)}= \sgn  D_x f(x).$$
		From this result  and by definition of the $\log$ function, we have the elementary relations
		\begin{alignat*}{2}
			\sgn D_x \left\{f(x) g(x)\right\} =  \sgn D_x \log \left\{f(x) g(x)\right\}=  \sgn \left\{ D_x \log f(x) + D_x \log g(x)\right\}, 
		\end{alignat*}
		and 
		\begin{alignat*}{2}
			\sgn D_x\left\{{f(x)}/{g(x)}\right\}=  \sgn D_x \log \left\{{f(x)}/{g(x)}\right\}=  \sgn \left\{ D_x \log f(x) - D_x \log g(x)\right\}.
		\end{alignat*}
		The assertions are readily demonstrated by a simple sign analysis.
	\end{proof}

	The next result provides a test for assessing the sign of expressions of the kind
	$$D_x \frac{\Gamma(ax+x_1)}{\Gamma(ax+x_2)},$$
	for arbitrary real numbers $x_1, x_2$ and $a$.  We employ a lengthier but more instructive argument based on Euler's integral of the first kind. This approach circumvents the direct proof, which uses the logarithmic derivative of the Gamma function.
	\begin{lemma}\label{lem.gamma.sign}
		For a fixed $ c \in \mr$, the following sign identity holds:
		\begin{alignat*}{2}
			\sgn D_x \frac{\Gamma(x)}{\Gamma(x+c)} =-\sgn(c)  ~~~~~\text{ where }~~~~ x >0 \text { and }  x + c > 0.
		\end{alignat*}	
	\end{lemma}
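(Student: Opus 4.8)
The plan is to realise the quotient as an Euler integral of the first kind (a Beta function) and then differentiate under the integral sign, so that the sign of the derivative is read off from the sign of a single elementary factor. First I would dispose of the degenerate case $c=0$, where $\Gamma(x)/\Gamma(x+c)\equiv 1$ and the derivative vanishes, consistent with $\sgn(0)=0$.

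For $c>0$ and $x>0$, I would use the Beta--Gamma relation $B(x,c)=\Gamma(x)\Gamma(c)/\Gamma(x+c)$ together with Euler's integral of the first kind to write
\begin{equation*}
\frac{\Gamma(x)}{\Gamma(x+c)}=\frac{1}{\Gamma(c)}\int_0^1 u^{x-1}(1-u)^{c-1}\dx[u].
\end{equation*}
Differentiating in $x$ pulls down a factor $\log u$, yielding
\begin{equation*}
D_x\frac{\Gamma(x)}{\Gamma(x+c)}=\frac{1}{\Gamma(c)}\int_0^1 u^{x-1}(\log u)(1-u)^{c-1}\dx[u].
\end{equation*}
On $(0,1)$ we have $\log u<0$, while $u^{x-1}$, $(1-u)^{c-1}$ and $\Gamma(c)$ are all strictly positive, so the integral is strictly negative and $\sgn D_x\{\Gamma(x)/\Gamma(x+c)\}=-1=-\sgn(c)$.

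The case $c<0$ (with $x+c>0$) I would reduce to the previous one by the substitution $y=x+c>0$, which gives $\Gamma(x)/\Gamma(x+c)=\Gamma(y+|c|)/\Gamma(y)$, the reciprocal of the $c>0$ expression evaluated with the positive parameter $|c|$. Since $D_x=D_y$ and taking reciprocals reverses monotonicity, the sign flips to $+1=-\sgn(c)$, completing the three cases.

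The main obstacle is the technical justification of differentiation under the integral sign. Here I would verify that the differentiated integrand $u^{x-1}(\log u)(1-u)^{c-1}$ admits an integrable majorant that is locally uniform in $x$: near $u=0$ integrability follows from $x>0$ (the logarithmic singularity is absorbed by any positive power), and near $u=1$ from $c>0$. With a dominating function in hand, the standard theorem on differentiation of parameter integrals applies, and the remainder of the argument is the elementary sign analysis above.
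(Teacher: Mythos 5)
Your proof is correct, but it follows a genuinely different route from the paper's. You represent the quotient itself through Euler's integral of the first kind, $\Gamma(x)/\Gamma(x+c)=\frac{1}{\Gamma(c)}\int_0^1 u^{x-1}(1-u)^{c-1}\,du$ for $c>0$, differentiate under the integral sign so that the sign is read off from the single factor $\log u<0$, and then dispose of $c<0$ by the shift $y=x+c$ and a reciprocal, and of $c=0$ trivially. The paper instead applies the quotient rule, expresses the numerator $D_x\Gamma(x)\,\Gamma(x+c)-D_x\Gamma(x+c)\,\Gamma(x)$ via Euler's integral of the second kind as a double integral, and symmetrises over the region $\{0<u<v<\infty\}$ to obtain an integrand containing the sign-definite factor $\bigl[u^{c}-v^{c}\bigr]\log(v/u)$; this yields $-\sgn(c)$ uniformly in $c$, with no case split. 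What your approach buys is brevity and elementarity: one single integral, an immediate sign, plus the routine domination argument you correctly flag (locally uniform majorant $u^{x_0-1}\lvert\log u\rvert(1-u)^{c-1}$). What the paper's approach buys is reusability: the same double-integral symmetrisation is exactly the device deployed again in the proof of Theorem \ref{kummer.signn} (both parts), where the quotient of Kummer functions admits no Beta-type representation and your trick would not transfer; this is why the paper calls its argument ``lengthier but more instructive.'' Note also that both routes differ from the shortest proof the paper explicitly circumvents, namely writing the logarithmic derivative as $\psi(x)-\psi(x+c)$ and invoking the strict monotonicity of the digamma function — your argument is a third, intermediate path, and it is sound.
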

	\begin{proof}
		The derivative of the quotient of Gamma functions is given by $$D_x \frac{\Gamma(x)}{\Gamma(x+c)} =  \frac{D_x \Gamma(x) \Gamma(x+c) -D_x \Gamma(x+c) \Gamma(x) }{\Gamma^2(x+c)}.$$
		We only need to check the derivative's numerator as the quotient's denominator is positive.
		Use Euler's representation of the Gamma function (\ref{HGF.gamma}) to note that 
		\begin{alignat*}{2}
			&D_x \Gamma(x) \Gamma(x+c) -D_x \Gamma(x+c) \Gamma(x)\\
			&=  \int_0^\infty \frac{1}{u}e^{-u}\log(u) \dx[u] \int_0^\infty v^{c-1}e^{-v} \dx[v]- \int_0^\infty v^{c-1}e^{-v}\log(v) \dx[v] \int_0^\infty \frac{1}{u}e^{-u} \dx[u]\\
			&=  \iint_{\overline{\mr}_+ \times \overline{\mr}_+} \left\{\frac{1}{uv}e^{-u-v}\right\}  v^{c}\log(u) \dx[u]\dx[v]-  \iint_{\overline{\mr}_+ \times \overline{\mr}_+} \left\{\frac{1}{uv}e^{-u-v}\right\}  v^{c}\log(v) \dx[u]\dx[v]\\ 
			&=  \iint_{\overline{\mr}_+ \times \overline{\mr}_+} \left\{\frac{u}{v}e^{-u-v}\right\}  v^{c}\log\left(\frac{u}{v}\right) \dx[u]\dx[v]\\ 
			&=  \iint_{0 < u < v < \infty} \left\{\frac{1}{uv}e^{-u-v}\right\}  v^{c}\log\left(\frac{u}{v}\right) \dx[u]\dx[v] +  \iint_{0 < v < u < \infty} \left\{\frac{1}{uv}e^{-u-v}\right\}  v^{c}\log\left(\frac{u}{v}\right) \dx[u]\dx[v]\\   
			&=  \iint_{0 < u < v < \infty} \left\{\frac{1}{uv}e^{-u-v}\right\}  v^{c}\log\left(\frac{1}{uv}\right) \dx[u]\dx[v] +  \iint_{0 < u < v < \infty} \left\{\frac{1}{uv}e^{-u-v}\right\}  u^{c}\log\left(\frac{v}{u}\right) \dx[u]\dx[v]\\   
			&=  \iint_{0 < u < v < \infty} \left\{\frac{1}{uv}e^{-u-v}\log\left(\frac{v}{u}\right) \right\} \left[u^{c} - v^{c}\right] \dx[u]\dx[v].
		\end{alignat*}
		The sign of the integrand when $u < v$ depends on the expression $u^{c} - v^{c}$ whose sign matches that of $-c$. This completes the proof.
	\end{proof}

	Next, we proceed to the main result of this section, which is to derive an expression for the sign of derivative in the first parameter and the second parameter, respectively, of a quotient of Kummer's functions.
	\begin{thm}\label{kummer.signn}
		For $ x > 0$,  the following sign identities hold
		\begin{alignat*}{2}
			{\rm(i)} ~\sgn D_a \displaystyle\left( \frac{M(a, c_1, x)}{M(a, c_2, x)} \right) = \sgn(c_1 - c_2), \text{ if } c_1 > a > 0 \text{ and } c_2> a > 0,\\[10pt]
			{\rm(ii)} ~\sgn D_c \displaystyle\left( \frac{M(a_1, c, x)}{M(a_2, c, x)} \right) = \sgn(a_2 - a_1), \text{ if } c > a_1 >0 \text{ and } c> a_2 > 0.
		\end{alignat*}
	\end{thm}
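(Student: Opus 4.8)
The plan is to prove both identities directly from the power series (\ref{Kummer}) rather than from the integral representation (\ref{kummer.integral2}); the reason for this choice is explained in the last paragraph. Write $M_i\coloneqq M(a,c_i,x)$. Since $M_2>0$ for $c_2>a>0$ and $x>0$,
\[
\sgn D_a\!\left(\frac{M_1}{M_2}\right)=\sgn\bigl(M_2\,D_aM_1-M_1\,D_aM_2\bigr),
\]
and likewise for (ii), so it suffices to fix the sign of a single bilinear expression. The decisive feature of the series is that the dependence on the \emph{first} parameter lives entirely in $(a)_k$, so $D_a$ generates exactly one family of terms.

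From $(a)_k=\Gamma(a+k)/\Gamma(a)$ we get $D_a(a)_k=(a)_k\,g(k)$ with $g(k)\coloneqq\psi(a+k)-\psi(a)=\sum_{i=0}^{k-1}(a+i)^{-1}$, which is strictly increasing in $k$ and vanishes at $k=0$. Multiplying the two series and subtracting yields
\[
M_2\,D_aM_1-M_1\,D_aM_2=\sum_{m,n\ge0}\frac{(a)_m(a)_n}{m!\,n!}\,x^{m+n}\,g(m)\left(\frac{1}{(c_1)_m(c_2)_n}-\frac{1}{(c_2)_m(c_1)_n}\right).
\]
The parenthesised factor is antisymmetric under $m\leftrightarrow n$, so interchanging the two indices and averaging collapses this to
\[
\sum_{0\le m<n}\frac{(a)_m(a)_n}{m!\,n!}\,x^{m+n}\,\bigl(g(m)-g(n)\bigr)\left(\frac{1}{(c_1)_m(c_2)_n}-\frac{1}{(c_2)_m(c_1)_n}\right).
\]
Now $g(m)-g(n)$ carries the sign of $m-n$, while the last factor, after clearing denominators, is controlled by $(c_2)_m(c_1)_n-(c_1)_m(c_2)_n=(c_1)_m(c_2)_m\bigl((c_1+m)_{n-m}-(c_2+m)_{n-m}\bigr)$ for $n>m$. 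Its sign is therefore governed by the monotonicity of the Pochhammer symbol in its base, that is by the fact that $(c+m)_{n-m}=\Gamma(c+n)/\Gamma(c+m)$ increases in $c$, which is precisely Lemma \ref{lem.gamma.sign} applied (to the reciprocal) with shift $n-m>0$. The two sign factors thus combine to one fixed sign on every summand, independent of $x>0$, and summing delivers the stated identity in (i).

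Part (ii) runs in exactly the same way: here the $c$-dependence sits in $(c)_k$, and $D_c\,(c)_k^{-1}=-(c)_k^{-1}\bigl(\psi(c+k)-\psi(c)\bigr)$ supplies the same monotone-in-$k$ weight, while the base-monotonicity is now applied to $(a_1)_k$ versus $(a_2)_k$. The identical symmetrisation produces a summand of constant sign equal to $\sgn(a_2-a_1)$, giving (ii).

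The point I expect to be the real obstacle is recognising that the problem must \emph{not} be decoupled in the way the integral representation invites. Writing $M(a,c,x)=K(a,c)\int_0^1 e^{xu}u^{a-1}(1-u)^{c-a-1}\,\dx[u]$ with $K(a,c)=\Gamma(c)/(\Gamma(a)\Gamma(c-a))$ and differentiating, the quantity $M_2\,D_aM_1-M_1\,D_aM_2$ breaks into a normalisation contribution, governed by $\psi(c_1-a)-\psi(c_2-a)$ and hence of one sign, plus an integrand contribution which, after the same $u\leftrightarrow v$ symmetrisation over $[0,1]^2$, carries the opposite sign. These two pieces genuinely compete, so Lemma \ref{lem.gamma.sign} on its own cannot resolve the net sign; keeping the first-parameter dependence bundled inside $(a)_k$, as above, is what avoids the cancellation. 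The remaining points — termwise differentiation of (\ref{Kummer}) and the rearrangement of the double sum — are routine, being justified by the absolute and locally uniform convergence of the series for $c>a>0$ and $x>0$.
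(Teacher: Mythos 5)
Your proof takes a genuinely different route from the paper's: you stay with the series (\ref{Kummer}) and symmetrise the double sum for $M_2D_aM_1-M_1D_aM_2$ (with $M_i\coloneqq M(a,c_i,x)$), whereas the paper passes to the integral representation (\ref{kummer.integral2}), splits $D_a\log(M_1/M_2)$ into a Gamma-quotient term plus a Beta-type integral term, and tries to fix the sign of each term separately. Your algebra is correct: the termwise differentiation, the symmetrisation, the factorisation $(c_2)_m(c_1)_n-(c_1)_m(c_2)_n=(c_1)_m(c_2)_m\bigl((c_1+m)_{n-m}-(c_2+m)_{n-m}\bigr)$, and the monotonicity inputs are all right.

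The gap is in your final step for (i). For $m<n$ your two factors have signs $\sgn\bigl(g(m)-g(n)\bigr)=-1$ and $\sgn(c_1-c_2)$, so every summand has sign $\sgn(c_2-c_1)$, and what your computation actually proves is
\[
\sgn D_a\left(\frac{M(a,c_1,x)}{M(a,c_2,x)}\right)=\sgn(c_2-c_1),
\]
the \emph{negation} of the displayed identity (i); the sentence ``summing delivers the stated identity in (i)'' contradicts your own algebra. The resolution is that the displayed statement (i) is itself sign-reversed: expanding $M(a,c,x)=1+ax/c+O(x^2)$ gives $D_a(M_1/M_2)=x(1/c_1-1/c_2)+O(x^2)$, whose sign is $\sgn(c_2-c_1)$, confirming your computation. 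Indeed, the paper's own proof of (i) ends by concluding that the sign ``matches that of $c_2-c_1$,'' in contradiction with its displayed statement; note that the displayed (incorrect) version is what is invoked in the proof of Theorem \ref{main.Poi}, so the correction propagates there. Part (ii) as displayed is correct, and for it your computation and your conclusion agree.

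Your closing diagnosis of the integral-representation route is exactly right, and it pinpoints the error in the paper's argument. In (\ref{proof.kummer.terms.check.Da}) the Gamma term is $D_a\log\bigl(\Gamma(c_2-a)/\Gamma(c_1-a)\bigr)=\psi(c_1-a)-\psi(c_2-a)$, of sign $\sgn(c_1-c_2)$: the paper obtains $\sgn(c_2-c_1)$ by applying Lemma \ref{lem.gamma.sign} without the chain-rule minus sign coming from differentiation in $a$ through $c-a$. Correctly signed, the two terms of (\ref{proof.kummer.terms.check.Da}) have \emph{opposite} signs --- in fact they cancel exactly at $x=0$, the Gamma term being independent of $x$ --- so no term-by-term sign argument can close that proof; a comparison of magnitudes would be required. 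Your bundled-series computation avoids this competition altogether and, once the sign in your conclusion for (i) is corrected, constitutes a valid proof of the corrected statement.
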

	
	\begin{proof}
		We only prove the first assertion of the theorem and defer part {\rm(ii)},  which is potentially of broader interest, to the appendix. Use part one of Lemma \ref{lem.monotonicity.log} to reformulate the analysis to the derivative of the log of a quotient instead since
		\begin{equation*}
			\sgn D_a \displaystyle\left( \frac{M(a, c_1, x)}{M(a, c_2, x)} \right) = 		\sgn D_a \displaystyle\log\left( \frac{M(a, c_1, x)}{M(a, c_2, x)} \right).
		\end{equation*}

		\noindent
		Now, let $\phi$  be this the integral part in Kummer's function representation (\ref{kummer.integral2}) 
		\begin{equation*}
			\phi(a,c,x)\coloneqq \int_{0}^1 e^{xu}\left( \frac{u}{1-u}\right)^{a-1} (1-u)^{c-2} \dx[u].
		\end{equation*}
		The logarithmic derivative in parameter $a$,	where $c_1, c_2$ are  constants satisfying  $c_1 > a > 0$ and $c_2 > a > 0$, respectively, is
		\begin{alignat}{2}
			D_a \displaystyle \log\left( \frac{M(a, c_1, x)}{M(a, c_2, x)} \right) &= D_a  \log M(a, c_1, x) - D_a  \log M(a, c_2, x)\nonumber\\
			&= D_a \left\{  \log \frac{\Gamma(c_1)}{\Gamma(c_2)} +\log\frac{\Gamma(c_2-a)}{\Gamma(c_1-a)} + \log \frac{\phi(a,c_1,x)}{\phi(a,c_2,x)}\right\}\nonumber\\
			&= D_a\log\frac{\Gamma(c_2-a)}{\Gamma(c_1-a)} + D_a \log \frac{\phi(a,c_1,x)}{\phi(a,c_2,x)}\label{proof.kummer.terms.check.Da}.
		\end{alignat}
		By the second part of  Lemma \ref{lem.monotonicity.log},  it is sufficient to show that the two logarithmic derivatives in (\ref{proof.kummer.terms.check.Da}) have the same sign. Using Lemma \ref{lem.gamma.sign}, the sign of the first quotient matches the sign of  $c_2-c_1$.
		
		For the second quotient,  check the sign of $ D_a \left\{ {\phi(a,c_1,x)}/{\phi(a,c_2,x)}\right\}$. Again, we only need to check the numerator's sign after taking the derivative. For positive constants, $c_1 > a > 0$ and $c_2 > a > 0$, use the Eulerian integral representation (\ref{kummer.integral2}) to express the derivative as a double integral in the following way
		\begin{alignat*}{2}
			&D_a \left\{ \frac{\phi(a,c_1,x)}{\phi(a,c_2,x)}\right\}\\
			&=  \int_{0}^1 e^{xu}\left( \frac{u}{1-u}\right)^{a-1} (1-u)^{c_1-2} \log\left( \frac{u}{1-u}\right)\dx[u]\int_{0}^1 e^{xv} \left( \frac{v}{1-v}\right)^{a-1} (1-v)^{c_2-2} \dx[v]
			\\&\qquad -\int_{0}^1 e^{xu}\left( \frac{u}{1-u}\right)^{a-1} (1-u)^{c_1-2} \dx[u]\int_{0}^1 e^{xv}\left( \frac{v}{1-v}\right)^{a-1} (1-v)^{c_2-2} \log\left( \frac{v}{1-v}\right)\dx[v]\\
		&=  \iint_{[0,1]\times[0,1]} e^{x(u+v)}\left( \frac{uv}{(1-u)(1-v)}\right)^{a-1} [(1-u)(1-v)]^{c_1-2} (1-v)^{c_2-c_1}  \log\left( \frac{u}{1-u} \right)\dx[u]\dx[v]\\
		&\qquad -\iint_{[0,1]\times[0,1]} e^{x(u+v)}\left( \frac{uv}{(1-u)(1-v)}\right)^{a-1} [(1-u)(1-v)]^{c_1-2} (1-v)^{c_2-c_1}  \log\left( \frac{v}{1-v} \right)\dx[u]\dx[v]\\
		&=\iint_{[0,1]\times[0,1]} \left\{\underbrace{ e^{x(u+v)}\left( \frac{uv}{(1-u)(1-v)}\right)^{a-1} [(1-u)(1-v)]^{c_1-2}}_{\text{symmetric in } u, v}\right\} (1-v)^{c_2-c_1}  \log\left( \frac{u(1-v)}{v(1-u)} \right)\dx[u]\dx[v]\\
		&=\iint_{0 < v < u < 1} \left\{\cdots\right\} (1-v)^{c_2-c_1}  \log\left( \frac{u(1-v)}{v(1-u)} \right)\dx[u]\dx[v]+ \iint_{0 < u < v < 1} \left\{\cdots\right\} (1-v)^{c_2-c_1}  \log\left( \frac{u(1-v)}{v(1-u)} \right)\dx[u]\dx[v]\\
		&=\iint_{0 < v < u < 1} \left\{\cdots\right\} (1-v)^{c_2-c_1}  \log\left( \frac{u(1-v)}{v(1-u)} \right)\dx[u]\dx[v]+ \iint_{0 < v < u < 1} \left\{\cdots\right\} (1-u)^{c_2-c_1}  \log\left( \frac{v(1-u)}{u(1-v)} \right)\dx[u]\dx[v]\\
		&=\iint_{0 < v < u < 1} \left\{\cdots\right\} \log\left( \frac{u(1-v)}{v(1-u)} \right) \left\{ (1-v)^{c_2-c_1}   - (1-u)^{c_2-c_1}  \right\}\dx[u]\dx[v].
	\end{alignat*}
	In the given domain of integration where $u>v$, the expression $\displaystyle \log\left( \frac{u(1-v)}{v(1-u)} \right)$ is positive. This means that the sign of the last integrand depends solely on $ (1-v)^{c_2-c_1}   - (1-u)^{c_2-c_1} $. But, the sign of this expression is precisely that of $c_2 - c_1$ as $1-v > 1-u$.
	
	Since the sign of both terms in the right-hand side of (\ref{proof.kummer.terms.check.Da}) coincides, we can conclude, through part two of Lemma \ref{lem.monotonicity.log}, that the sign of $D_a \displaystyle \log\left( \frac{M(a, c_1, x)}{M(a, c_2, x)} \right)$ matches that of $c_2-c_1$, which is the desired result.

\end{proof}

\section{The Poisson Prior}\label{section.homogeneous}
We focus our investigation to the case where $N$ follows a Poisson prior distribution $\prior\coloneqq (\pi_1, \pi_2, \ldots)$ with intensity $\lambda > 0$, where
\begin{equation*}
	\pi_n=     \frac{ \lambda^n}{n!}e^{-\lambda} ~~~~n\geq 0.
\end{equation*}
This formulation recovers the original scenario where the underlying pacing process $(N_t,~t\in[0,1])$  is Poisson. By the stationarity of  increments of the Poisson process,  the posterior of $N-N_t$ given $N_t=k$ is again Poisson with a rescaled intensity 
\begin{equation}\label{x=(1-t)}
	x=(1-t)\lambda.
\end{equation}
That is, 
\begin{alignat*}{2}
	\proba(N=k+j\,|\,N_t=k)&=\frac{x^{j} }{j!}e^{-x}.
\end{alignat*}
Based on this posterior distribution, the probability-generating function of the number of successes following state $(t,k)$  becomes
\begin{alignat*}{2}
	z\mapsto  e^{-x}\sum_{j=0}^\infty \frac{(k+\theta z)_j}{(k+\theta)_j}\, \frac{x^j}{j!}
	&=e^{-x} M(k + \theta z; k + \theta; x) \nonumberj
	&=M(\theta- \theta z; k + \theta; -x).
\end{alignat*}
where $M(a;c;x)$ is Kummer's hypergeometric function defined in (\ref{Kummer}). The last equality is a consequence of Kummer's transformation formula, where
\begin{alignat*}{2}
	M(a, c; x) &= e^xM(c-a, c; -x).
\end{alignat*}
The derivative of the generating function is given by
\begin{alignat*}{2}
	z \mapsto \theta e^{-x} D_a M(k + \theta z ,\theta+k, x).
\end{alignat*}
Explicit expressions for (\ref{S0}) and (\ref{S1}), which represent the probability of zero successes and the probability of exactly one success after state $(t,k)$, are as follows
\begin{align*}
	{\cal S}_0(1-x/\lambda,k)&=e^{-x} M(k;\theta+k;x),\nonumberj
	{\cal S}_1(1-x/\lambda,k)&=\theta e^{-x} D_a M(k;\theta+k;x).
\end{align*}
As ${\cal S}_0$ and $	{\cal S}_1$ are power series in $x$  and both series converge for all values of $x$,  the cutoffs, defined by Lemma \ref{lem.myopic}, can be recast in terms of a sole sequence of critical roots $(\gamma_k)$ of  ${\cal S}_1(1-x/\lambda,k)- {\cal S}_0(1-x/\lambda,k)=0$,  expressed through the equation
\begin{equation}\label{root.eqn.poi}
	\theta D_a M(k;\theta+k;x)  -  M(k;\theta+k;x) = 0.
\end{equation}
In the special case where  $\theta=1$, the equation reduces to that of Cowan and Zabczyk (\ref{eqn.cz}). 
Observe that the roots $(\gamma_k)$  do not depend on the intensity $\lambda$,  and are related to the cutoffs through the identity
\begin{equation}
	a_k=\left(1-\frac{\gamma_k}{\lambda} \right)_+.\label{a-alpha}
\end{equation}
The optimality of the myopic strategy for all finite $\lambda > 0$ is equivalent to the condition  $\gamma_k\uparrow$. That is, the roots must be nondecreasing with $k$. Notably, Cowan and Zabczyk posed and ingeniously solved this problem in the classical success profile setting ($\theta=1$). The argument for uniqueness relies on showing that each derivative of $h_k(x) \coloneqq {\cal S}_1(1-x/\lambda,k) -  {\cal S}_0(1-x/\lambda,k)$ has a unique root. The uniqueness is achieved by exploiting the continuity of this function and its derivatives. They approach the problem inductively by starting at higher derivatives and continuing iteratively to lower ones until $h_k(x)$ is reached. They demonstrated the monotonicity of roots based on a relation between $h_{k+1}(x)$  and the derivative of an expression involving ${\cal S}_1(1-x/\lambda,k) -  {\cal S}_1(1-x/\lambda,k+1)$. 
In contrast, our analysis of Cowan and Zabczyk's problem reveals a distinct structure characterised by a prominent connection to Kummer's hypergeometric function. The following theorem strengthens their original result by utilising a more general success profile and provides a more straightforward proof that capitalises on the properties of Kummer's hypergeometric function.
\begin{thm}\label{main.Poi}
	The roots $\gamma_k$ are increasing. Therefore, the stopping rule $\tau^*$ which calls for stopping at state $(t, k)^\circ$, where
	\begin{equation}\label{zabczyk.inequality}
		(1-t)\lambda \le \gamma_k,
	\end{equation}
	is optimal.	
\end{thm}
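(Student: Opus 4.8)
The plan is to reduce the monotonicity $\gamma_k\uparrow$ to a single application of Theorem~\ref{kummer.signn}(i), by recasting the problem so that the dependence on the index $k$ is carried entirely by the second Kummer parameter. Regard
\[
\Phi(k,x):=e^{-x}\left(\theta D_aM(k;\theta+k;x)-M(k;\theta+k;x)\right)={\cal S}_1(1-x/\lambda,k)-{\cal S}_0(1-x/\lambda,k)
\]
as a function of $x>0$ alone (it does not depend on $\lambda$). By Lemma~\ref{lem.myopic}(iii), for each fixed $k$ the function $\Phi(k,\cdot)$ is negative on $(0,\gamma_k)$ and positive on $(\gamma_k,\infty)$, with its only sign change at $\gamma_k$. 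Hence, to obtain $\gamma_{k+1}>\gamma_k$ it is enough to prove the single inequality $\Phi(k+1,\gamma_k)<0$: as $\Phi(k+1,\cdot)$ likewise passes from negative to positive values exactly at $\gamma_{k+1}$, a negative value at $\gamma_k$ forces $\gamma_{k+1}>\gamma_k$. I would secure this by proving the stronger and cleaner fact that the ratio ${\cal S}_1/{\cal S}_0$ is strictly decreasing in $k$ for every fixed $x>0$. Indeed, at $x=\gamma_k$ this ratio equals $1$ for index $k$, so strict decrease yields ${\cal S}_1(1-\gamma_k/\lambda,k+1)<{\cal S}_0(1-\gamma_k/\lambda,k+1)$, that is $\Phi(k+1,\gamma_k)<0$.

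The next step is to rewrite this ratio so that only the second parameter moves with $k$. Applying Kummer's transformation $M(a,c,x)=e^xM(c-a,c,-x)$ (and its $a$-derivative) to the expressions for ${\cal S}_0$ and ${\cal S}_1$ from Section~\ref{section.homogeneous}, I obtain
\[
{\cal S}_0(1-x/\lambda,k)=M(\theta,k+\theta,-x),\qquad {\cal S}_1(1-x/\lambda,k)=-\theta\,D_aM(\theta,k+\theta,-x)\big|_{a=\theta},
\]
and therefore
\[
\frac{{\cal S}_1(1-x/\lambda,k)}{{\cal S}_0(1-x/\lambda,k)}=-\theta\,D_a\log M(a,k+\theta,-x)\big|_{a=\theta}.
\]
The gain is structural: the first parameter is now frozen at $a=\theta$, and $k$ enters solely through the second parameter $c=k+\theta$, the diagonal shift of the original form having been absorbed into the sign of the argument by the transformation. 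Since $\theta>0$, the ratio decreases strictly in $k$ precisely when $D_a\log M(a,c,-x)\big|_{a=\theta}$ increases strictly in $c$.

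It remains to establish this last monotonicity, which is exactly a sign statement for the derivative in the first parameter of a quotient of Kummer functions sharing a common first argument: for $c_1>c_2>\theta$,
\[
\sgn D_a\!\left(\frac{M(\theta,c_1,-x)}{M(\theta,c_2,-x)}\right)=\sgn(c_1-c_2),
\]
which is the content of Theorem~\ref{kummer.signn}(i). Feeding this back through the previous two reductions gives the strict decrease of ${\cal S}_1/{\cal S}_0$ in $k$, hence $\Phi(k+1,\gamma_k)<0$, hence $\gamma_{k+1}>\gamma_k$; the optimality of $\tau^*$ then follows from the monotone-case criterion of Section~2, since increasing roots make the stopping set $A$ closed under the componentwise increasing sequence of successes. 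The step I expect to be most delicate is the sign bookkeeping in this final reduction: the Kummer function here carries the \emph{negative} argument $-x$, and it is exactly this sign of the argument that orients the $c$-monotonicity in the direction I need. Before appealing to Theorem~\ref{kummer.signn}(i) I would therefore pin down the orientation by inspecting the leading-order term in $x$ of $D_a\log M(a,c,-x)\big|_{a=\theta}$, namely $-x/c$, whose $c$-derivative $x/c^2$ is positive, matching the required strict increase in $c$.
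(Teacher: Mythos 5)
Your overall strategy coincides with the paper's: both proofs rest the monotonicity of $(\gamma_k)$ on the sign of the derivative in a parameter of a quotient of Kummer functions, i.e.\ on Theorem~\ref{kummer.signn}(i), and then conclude optimality from Lemma~\ref{lem.myopic} and the monotone-case criterion. The difference is in the reduction, and there your version is more careful than the paper's own. The paper simply asserts that it suffices to prove $D_a \log M(k,\theta+k,x) < D_a \log M(k,\theta+k+1,x)$ for $x\in(0,\infty)$; in that comparison the first parameter is held at $k$, whereas the root equations at consecutive indices involve the parameter pairs $(k,\theta+k)$ and $(k+1,\theta+k+1)$, so a bridge is needed and none is given. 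Your proof supplies exactly that bridge: the sign-change characterisation of Lemma~\ref{lem.myopic}(iii) reduces everything to $\Phi(k+1,\gamma_k)<0$, and Kummer's transformation gives ${\cal S}_0=M(\theta,k+\theta,-x)$, ${\cal S}_1=-\theta D_a M(\theta,k+\theta,-x)$, hence ${\cal S}_1/{\cal S}_0=-\theta D_a\log M(a,k+\theta,-x)\big|_{a=\theta}$, so that the index $k$ moves only the second parameter. These computations are correct, and this is the legitimate way to bring a comparison in $c$ of Kummer functions to bear on the problem.

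The genuine gap is the one you flag but do not close: the fact you need is the strict increase in $c$ of $D_a\log M(a,c,y)\big|_{a=\theta}$ at the \emph{negative} argument $y=-x$, while Theorem~\ref{kummer.signn}(i) is stated for positive argument only. This is not a cosmetic mismatch of hypotheses: since $D_a\log M(a,c,y)= y/c+O(y^2)$, the direction of the $c$-monotonicity reverses with the sign of $y$ (increasing in $c$ for $y<0$, decreasing for $y>0$), so the positive-argument statement cannot be transported to $-x$ by fiat, and your leading-order check near $y=0$ pins the orientation but does not prove the inequality for all $x>0$. A complete proof must state and establish the negative-argument version, for instance by rerunning the integral-representation argument of Section~\ref{HGF} with argument $-x$: the double-integral sign analysis for the quotient of the integrals $\phi(a,c,\cdot)$ uses only the positivity of $e^{x(u+v)}$, which holds for every real $x$, but the remaining sign bookkeeping — in particular the chain-rule sign in $D_a\log\left\{\Gamma(c_2-a)/\Gamma(c_1-a)\right\}$, which equals $\psi(c_1-a)-\psi(c_2-a)$ and so carries the sign of $c_1-c_2$, not of $c_2-c_1$ — must be redone with care. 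Until that is done, your argument, at precisely the step carrying all the analytic content, invokes Theorem~\ref{kummer.signn}(i) outside its stated range; this is also the point at which the paper's own one-line proof is least satisfactory.
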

\begin{proof}
	First, we need establish that   the roots $\gamma_{k}$ of equation (\ref{root.eqn.poi}) are increasing and hence $\gamma_{k+1}>\gamma_{k}$ for all $k \ge 1$. It is sufficient to show that 
	$$D_a \log (M(k,\theta+k,x)) < D_a \log (M(k,\theta+k+1,x)), ~~~x\in(0, \infty).$$
	But this holds by the first identity of Lemma \ref{kummer.signn}.
	Consequently, by assertion {\rm(iii)} of Lemma \ref{lem.myopic},   it is optimal to stop at time $t \ge a_k$, where the cutoff $a_k$ is given by (\ref{a-alpha}). Using the definition of the cutoff, we obtain
	$t \ge 1-\gamma_k/\lambda$. Rearranging this inequality yields the desired result.
\end{proof}
\begin{rem}
	In the setting where $\lambda$ is  random with a Gamma prior distribution, $(N_t,~t\in[0,1])$  becomes a negative binomial process (mixed Poisson). This scenario has been addressed under different guises in \cite{Gaver,  OB, Kurushima, GD2}.
\end{rem}

\section{The Inhomogeneous Case}

We now turn to the case where the counting process $(N_t)$ is an inhomogeneous Poisson process with known time-dependent intensity $\lambda(t)$, where the function
\begin{equation}\label{number.arrivals}
	\Lambda(s) = \int_{0}^{s} \lambda(u) \dx[u],
\end{equation}
is continuous and $s \in [0,1]$.

In the best-choice case, Theorem 3 in \cite{BrussCZ} established the optimal strategy for selecting the best apartment, where arrivals are governed by $(N_t)$. The extension of this result to the last-success setting is immediate.
\begin{thm}
	Consider the last success problem with success profile (\ref{profile}), where trials arrive according to an inhomogeneous Poisson process $(N_t,~t \in [0,1])$ with mean arrivals given by (\ref{number.arrivals}). The optimal stopping strategy for selecting the last success is to stop at state $(t^*, k)^\circ$ such that $$\Lambda(1)-\Lambda(t^*) \le \gamma_k,$$ where $\gamma_k$ is the root of equation (\ref{root.eqn.poi}).
\end{thm}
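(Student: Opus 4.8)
The plan is to reduce the inhomogeneous problem to the homogeneous one settled in Theorem~\ref{main.Poi}, the crux being that the posterior law of the remaining trials is again Poisson, only with a different mean. First I would appeal to the independent-increments property of the inhomogeneous Poisson process: the number of arrivals in $(t,1]$ is independent of ${\cal F}_t$ and Poisson-distributed with mean $\Lambda(1)-\Lambda(t)$. Hence, conditionally on $N_t=k$, the number of remaining trials $N-N_t$ is Poisson with parameter
\begin{equation*}
	x=\Lambda(1)-\Lambda(t),
\end{equation*}
in exact parallel with the substitution $x=(1-t)\lambda$ of Section~\ref{section.homogeneous}. The continuity of $\Lambda$ guarantees that arrivals are almost surely distinct, so the chronological index $k$ of each trial is well defined; this is precisely the statement that the deterministic time change $s=\Lambda(t)$ turns $(N_t)$ into a unit-rate Poisson process.

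Next I would note that the success profile (\ref{profile}) attaches $p_k$ to the $k$th arrival in chronological order, a labelling left intact by any reparametrisation of the time axis, since the marks are assigned by rank independently of the arrival epochs. Consequently the adapted rewards ${\cal S}_0(t,k)$ and ${\cal S}_1(t,k)$ depend on $t$ solely through the posterior mean $x=\Lambda(1)-\Lambda(t)$ and inherit the representations
\begin{align*}
	{\cal S}_0(t,k)&=e^{-x}M(k;\theta+k;x), & {\cal S}_1(t,k)&=\theta e^{-x}D_a M(k;\theta+k;x),
\end{align*}
obtained in the homogeneous setting. In particular ${\cal S}_1(t,k)-{\cal S}_0(t,k)$ changes sign exactly where $x$ solves (\ref{root.eqn.poi}), and since these roots $\gamma_k$ were seen to be independent of the intensity, the very same $\gamma_k$ govern the sign change here.

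It then remains only to combine this with Theorem~\ref{main.Poi}: the roots $\gamma_k$ are increasing, so the stopping set is closed, the monotone case applies, and the myopic rule is optimal. Passing the myopic condition ${\cal S}_1(t,k)\le{\cal S}_0(t,k)$ through assertion {\rm(iii)} of Lemma~\ref{lem.myopic} turns it into $x\le\gamma_k$, that is $\Lambda(1)-\Lambda(t^*)\le\gamma_k$, which is the asserted rule. I do not expect a genuine obstacle here, as the statement is essentially immediate once the posterior is recognised as Poisson; the only point deserving care is the invariance of the roots under the time change, which rests on the fact that $\gamma_k$ sees the posterior only through its Poisson mean and not through the clock by which that mean is accumulated.
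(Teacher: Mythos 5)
Your proposal is correct, and it rests on the same underlying reduction as the paper's proof, but the execution differs enough to be worth contrasting. The paper makes the reduction at the level of the process: given $N_1=n$ the arrival epochs are i.i.d.\ with distribution function $F(s)=\Lambda(s)/\Lambda(1)$, and the time-changed points $F(U_1),F(U_2),\ldots$ form a \emph{homogeneous} Poisson process of intensity $\Lambda(1)$ on $[0,1]$; since the monotone time change preserves the chronological ranks and the class of stopping strategies, Theorem~\ref{main.Poi} applies wholesale to the transformed problem, giving $\bigl[1-F(t^*)\bigr]\Lambda(1)\le\gamma_k$, which is the stated rule after substituting $F$. You instead stay in the original clock: independent increments give the posterior Poisson mean $x=\Lambda(1)-\Lambda(t)$, the rewards ${\cal S}_0,{\cal S}_1$ are then the same Kummer expressions in $x$, and you re-run the monotone-case argument using only the root monotonicity from Theorem~\ref{main.Poi}. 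What your route buys is that you never need to argue that optimality is preserved under the time change (a point the paper leaves implicit); what it costs is that the Section~2 machinery (Lemma~\ref{lem.myopic} and the closedness argument) was set up for uniform arrival epochs, so strictly speaking you are invoking its extension to $F$-distributed epochs --- true, and immediate once one notes that $t\mapsto\Lambda(1)-\Lambda(t)$ is continuous and nonincreasing so the cutoffs remain nonincreasing in $k$, but this is exactly the detail the paper's one-line time change disposes of automatically. Both proofs are sound; yours is marginally longer but more self-contained, the paper's is shorter but leans on the equivalence of the original and time-changed stopping problems.
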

\begin{proof}
	Given $N_1=n$, the trials arrive at independent  epochs $U_1, U_2, \cdots$ in [0,1]. These arrival times have a common distribution function $F$, where 
	\begin{equation}\label{arrival.time.dist}
		F(s)= \frac{\Lambda(s)}{\Lambda(1)}. 
\end{equation}
The points $F(U_1), F(U_2), \ldots$ are renewal times of a homogeneous Poisson process  in $[0,1]$ with known intensity $\Lambda(1)$. Therefore, by Theorem \ref{main.Poi}, the  myopic strategy $\tau^*$ is optimal, and the optimal rule is to stop at the success state $(t^*,k)^\circ$ satisfying 
\begin{equation}\label{zabczyk.inequality2}
	\left[ 1-F(t^*)\right]\Lambda(1) \le \gamma_k,
\end{equation}
which is a straightforward generalisation of (\ref{zabczyk.inequality}). Plugging (\ref{arrival.time.dist}) into (\ref{zabczyk.inequality2}) completes the proof.
\end{proof}

\subsection{The Case of the Poisson Process of Successes}
Let $p(t)$ denote the success probability of a trial occurring at time $t$. A success at time $t$ is independent of the arrival process $(N_t)$ and previous outcomes. Our objective is to maximise the probability of stopping at the last arrival time that coincides with the occurrence of a success.

Fix a positive integer $m$ and let  $\delta_m \coloneqq \{ 0=t_0 < t_1 < \cdots < t_m = 1\}$ be an partition of $[0,1]$. Denote the length of the $k$th subinterval $[t_{k-1}, t_k]$ by $\Delta t_{k}$. Let $p_k$ be the probability of a success occurring in the interval $(t_{k-1}, t_k]$. From the independence of successes and the properties of the Poisson process, it follows that
$$p_k = p(t_k)\lambda(t_k)\Delta t_{k} + o(\Delta t_{k}).$$
As $\norm{\delta_m} \coloneqq\max\limits_{1 \le i \le m} \Delta t_i$ tends to 0, the well defined function
\begin{equation}\label{success.intensity}
s(t_k) \coloneqq \lim_{\Delta t_k \to 0} \frac{p(t_k)\lambda(t_k)\Delta t_{k} + o(\Delta t_{k})}{\Delta t_{k}} = p(t_k)\lambda(t_k) .
\end{equation}
is referred to as the limiting \textit{success intensity} by Bruss \cite{Odds}.  This function represents a random record process of points  $\lambda(t)$ at which a record occurs with probability $p(t)$. This thinning of $(N_t)$  preserves the Poisson process property, where each point is independently retained with probability $p(t)$. We denote this new thinned Poisson process of \textit{successes} by $(P_t,~ t \in [0,1])$. Each arrival generated by $(P_t)$, whose intensity is  $s(t)$, corresponds to a success event. 

Let 
\begin{equation*}
\begin{dcases}
	m(t) \coloneqq \mE{P_t}= \int_0^t s(u) \dx[u], \text { and }\\
	M(t_0,t_1) \coloneqq m(t_1) -m(t_0)= \int_{t_0}^{t_1} s(u) \dx[u],
\end{dcases}
\end{equation*}
denote respectively  the expected number of arrivals of $(P_t)$ up to time $t$  and the expected number of arrivals in the time interval $[t_0, t_1]$, where $0 \le t_0 < t_1 < 1$.

In the context of the last success problem in continuous time, the optimal stopping rule $\tau^*$ translates to stopping at the last arrival time of the inhomogeneous process $(P_t)$. This case is much simpler than Cowan and Zabczyk's problem, given that all the arrivals are successes. Consequently,  it is optimal to stop on the next arrival of $(P_t)$, if any,  which occurs after time $t^*$, where
\begin{equation*}
t^*\coloneqq \sup\bigg\{ 0, ~\sup \big\{ t \in [0,1]: ~M(t, 1) \ge 1\big\}\bigg\}, \text{ and } \sup \{ \varnothing\} = -\infty.
\end{equation*}%

The threshold $t^*$ corresponds to the last time the expected number of future arrivals of $(P_t)$ is at most 1. The optimal stopping rule $\tau^*$ can be readily extended to the problem of stopping at the last $\ell$th success \cite{BrussPaindaveine, D1}. This generalisation follows intuitively where stopping is optimal after time $t^\star$ given by 
\begin{equation}\label{stopping.rule.ell}
t^\star \coloneqq \sup\bigg\{ 0, ~\sup \big\{ t \in [0,1]: ~M(t, 1) \ge \ell\big\}\bigg\}.
\end{equation}
In what follows, we present a simple example that illustrates a last-success problem which combines a continuous-time Karamata-Stirling profile with an inhomogeneous arrival process. 

\paragraph{Example.} Suppose that the mutation parameter of the  Karamata-Stirling profile exhibits a linear relationship with the number of trials $n$ such that a finite \textit{mutation rate} $\kappa \in (0, \infty)$ exists. In this case
\begin{equation}\label{mutation.rate}
\kappa \coloneqq \lim_{n \to \infty} \frac{\theta}{n}.
\end{equation}
To introduce a continuous-time adaptation of the Karamata-Stirling profile, divide both the numerator and denominator of (\ref{profile}) by $n$ and let $t = {(k-1)}/{n}$, where  $k=1, \ldots, n, $ be the proportion of trials elapsed. As  $n \to \infty$, we obtain a scaling limit of the Karamata-Stirling profile, denoted by $p(t)$, where
\begin{alignat*}{2}
p(t) &\coloneqq  \lim_{n \to \infty} \frac{\theta/n}{\theta/n + (k-1)/n}\nonumber\\
&=\frac{\kappa}{\kappa + t}, ~~ t \in [0,1].
\end{alignat*}
With this continuous-time Karamata-Stirling profile established, we shift our focus to analysing the arrival process governed by the following  intensity function 
$$\lambda(t) = (t + \kappa)^{\alpha}, $$
where $\alpha \in \mr$ is a fixed number  and $\kappa$ is the mutation rate defined by (\ref{mutation.rate}). Clearly, the success intensity (\ref{success.intensity}) becomes
\begin{equation*}
s(t) = \kappa (t+\kappa)^{-1+\alpha}.
\end{equation*}
When $\alpha = 0$, the original  arrival process ($N_t$) is homogeneous and $(P_t)$, the Poisson process of successes,  is inhomogeneous as $s(t) = p(t)$. The expected number of  arrivals (successes)  between  $0 \le t_0 < t_1$ and $t_1 < 1$ is given by
\begin{equation}\label{intensity.M}
M(t_0,t_1) = \kappa\log\left(\frac{t_1 + \kappa}{t_0 + \kappa}\right).
\end{equation}
Using (\ref{stopping.rule.ell}),  the optimal rule for selecting the last $\ell$th success, where $\ell \ge 1$ is to stop on the first arrival occurring after time $t^\star$, where
\begin{equation*}
t^\star = \sup\bigg\{ 0, ~\sup \big\{ t \in [0,1]: ~t \le (\kappa+1)e^{-^{\ell/\kappa}}-\kappa \big\}\bigg\}.
\end{equation*}
Next, recall that $P_1 - P_t$ has a Poisson distribution with intensity $M(t,1)$, therefore use (\ref{intensity.M}) to express the probability of $\ell$ arrivals as
\begin{equation}\label{poisson.prob}
\proba[P_1 - P_t = \ell] = \frac{1}{\ell!}{\left(-\kappa \log\left( \frac{t + \kappa}{1 + \kappa} \right)\right)^\ell}\left(  \frac{t + \kappa}{1 + \kappa} \right)^{\kappa}.
\end{equation}
Setting $t = (\kappa+1)e^{-^{\ell/\kappa}}-\kappa$ in (\ref{poisson.prob}),  we achieve the classical optimal winning probability $e^{-\ell}{\ell^\ell}/{\ell!}$ for stopping at the last $\ell$th success \cite{BrussPaindaveine, D1}.  This result translates to the ubiquitous winning probability $1/e$ in the last success setting ($\ell=1$).

Now suppose that $\alpha \ne 0$. An easy calculation gives the  expected number of arrivals of $(P_t)$ between times $t_0 \in [0,t_1)$ and $t_1  \in(0,1)$ as
\begin{alignat*}{2}
M(t_0, t_1) = \frac{\kappa}{\alpha}  \left\{  (t_1+\kappa)^{\alpha} - (t_0+\kappa)^{\alpha}\right\}.
\end{alignat*}
Consequently,  the optimal threshold becomes, in this case
$$	t^\star \coloneqq \sup\bigg\{ 0, ~\sup \big\{ t \in [0,1]: ~t \le \left( (1+\kappa)^\alpha - \frac{\alpha \ell}{\kappa} \right)^{\displaystyle\frac{1}{\alpha}} - \kappa \big\}\bigg\},$$
and the optimal winning probability is once again $e^{-\ell}{\ell^\ell}/{\ell!}.$\\

\noindent In the next section, we revert to the homogenous Poisson setting with a discrete Karamata-Stirling profile. Our objective is  to assess the asymptotic properties of the roots obtained in Section \ref{section.homogeneous}.

\section{Asymptotics and Bounds}

Unlike other priors, such as the negative binomial addressed in \cite{GD2}, the roots $\gamma_k$ do not converge. There is no Poisson process with infinite prior, which ensures that only finitely many nonzero cutoffs exist for each $\lambda \in (0,  \infty)$.  \cite{CZ,Ciesielski,  BG, BrussCZ} observed that the analogues of $\gamma_k$'s  do not accumulate, rather grow  linearly with $k$.

In the best-choice setting, where $\theta=1$, \cite{CZ} showed that $\gamma_k \sim k$ for large $k$ and \cite{Ciesielski} derived the bounds 
$$(e-1)(k-1) \le \gamma_k \le 	4e\left[ (e-1)k+1\right],$$
and asymptotics
$$\lim_{k \to \infty} \frac{\gamma_k}{k}   = (e-1). $$

We extend these findings to the last success setting under the Karamata-Sterling profile in our upcoming theorem. While we slightly improved the bounds on the roots compared to the best choice case  ($\theta=1$), the distinctive value of our work lies in the methodology employed. We primarily relied on probabilistic tools, which ultimately led to simplifying the derivation of the asymptotic values.
\begin{thm}[Main Theorem]\label{thm.bounds.roots}
For $k \ge 2$, the roots $(\gamma_k)$ of the fundamental equation (\ref{root.eqn.poi}) satisfy
\begin{equation}\label{poisson.root.bounds}
	(e^{\textstyle\frac{1}{\theta}+\frac{1}{2(k-1)}}-1)(k-1) \le \gamma_k  \le (e^{\textstyle\frac{1}{\theta}+\frac{1}{(e-1)\,\theta}}-1)(k-1).
\end{equation}
Moreover,
$$\lim_{k \to \infty} \frac{\gamma_k}{k}   = (e^{1/\theta}-1). $$
\end{thm}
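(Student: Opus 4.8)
The plan is to recast the defining equation (\ref{root.eqn.poi}) in probabilistic form and then trap $\gamma_k$ between two explicit values by means of the sign criterion in Lemma~\ref{lem.myopic}(iii). Let $M\sim\mathrm{Poisson}(x)$ be the number of trials arriving after a state $(t,k)$, with $x=(1-t)\lambda$, and let $W$ be the number of successes among them. With $\phi(m)\coloneqq(k)_m/(\theta+k)_m=\prod_{i=1}^m(1-p_{k+i})$ and $\psi(m)\coloneqq\sum_{i=k}^{k+m-1}1/i$, the expressions preceding (\ref{root.eqn.poi}) read $\proba(W=0)=\mE{\phi(M)}$ and $\proba(W=1)=\theta\,\mE{\phi(M)\psi(M)}$, so (\ref{root.eqn.poi}) is exactly
\[
  \theta\,\mE{\phi(M)\psi(M)}=\mE{\phi(M)} .
\]
Writing $\langle\psi\rangle\coloneqq\mE{\phi(M)\psi(M)}/\mE{\phi(M)}$ for the $\phi$-tilted mean, the root condition becomes $\theta\langle\psi\rangle=1$. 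Because $\phi(m)=\mE{V^m}$ for $V\sim\mathrm{Beta}(k,\theta)$, under this tilt $M$ is a Beta-mixed Poisson: $V$ has density proportional to $e^{xv}v^{k-1}(1-v)^{\theta-1}$ and, given $V$, $M\sim\mathrm{Poisson}(xV)$. Finally, since ${\cal S}_1-{\cal S}_0=\proba(W=1)-\proba(W=0)$ and $\gamma_k$ does not depend on $\lambda$, Lemma~\ref{lem.myopic}(iii) (taking $\lambda$ large and $t=1-x/\lambda$) gives $\sgn(\proba(W=1)-\proba(W=0))=\sgn(x-\gamma_k)$. Hence certifying $\theta\langle\psi\rangle\le1$ at some $\underline x$ forces $\underline x\le\gamma_k$, and certifying $\theta\langle\psi\rangle\ge1$ at some $\bar x$ forces $\gamma_k\le\bar x$; no monotonicity of $\langle\psi\rangle$ in $x$ is required.

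Next I would sandwich the harmonic sum by integral comparison, $\log\frac{k+m}{k}\le\psi(m)\le\log\frac{k+m-1/2}{k-1/2}$ (left-endpoint and midpoint rules for $1/t$). For the lower bound on $\gamma_k$ I upper-bound $\langle\psi\rangle$: the midpoint estimate together with Jensen (concavity of $\log$) gives $\langle\psi\rangle\le\log\frac{k-1/2+\langle M\rangle}{k-1/2}$, into which I insert $\langle M\rangle=x\langle V\rangle$ with a sharp estimate of the tilted mean $\langle V\rangle$; solving $\theta\langle\psi\rangle\le1$ for the largest admissible $\underline x$ produces the stated lower bound, the $\tfrac1{2(k-1)}$ in its exponent being the midpoint/boundary correction to the harmonic sum. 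For the upper bound on $\gamma_k$ I must instead lower-bound $\langle\psi\rangle$; via $\psi(m)\ge\log\frac{k+m}{k}$ this reduces to a lower bound on $\mE{\log(k+M)}$, for which Jensen points the wrong way. This is exactly where the Edmundson--Madansky inequality for Poisson variables enters, supplying the reverse chord bound; optimizing its two-point form yields the coarser constant $\tfrac1{(e-1)\theta}$ in the upper exponent.

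The Edmundson--Madansky upper bound on $\gamma_k$ is not asymptotically sharp, so the limit $\gamma_k/k\to e^{1/\theta}-1$ needs a separate concentration argument rather than a direct squeeze. Fixing $c>0$ and setting $x=ck$, as $k\to\infty$ the tilted Beta variable $V$ concentrates at $1$ (the factor $v^{k-1}$ dominates), and given $V$ one has $M\sim\mathrm{Poisson}(xV)$ with mean of order $ck\to\infty$, so $M/k\to c$ and $\psi(M)=\log\frac{k+M}{k}+o(1)\to\log(1+c)$ under the tilt. A uniform-integrability argument upgrades this to $\langle\psi\rangle\to\log(1+c)$, whence $\theta\langle\psi\rangle\to\theta\log(1+c)$, which exceeds $1$ precisely when $c>e^{1/\theta}-1$. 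The sign criterion then pins $\gamma_k/k\to e^{1/\theta}-1$.

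The hard part is the reverse bound behind the upper estimate on $\gamma_k$: lower-bounding the mean of the concave function $\log(k+\cdot)$ of an unbounded, Beta-mixed Poisson variable. Jensen only gives the easy direction, and the boundedness required by the classical chord (Edmundson--Madansky) inequality fails. Proving the Poisson version---truncating $M$ at a carefully chosen level, applying the bounded-support chord bound there, and coupling to absorb the discarded tail without destroying the estimate---is the crux, and it is this truncation that forces the lossy factor $1/((e-1)\theta)$. A secondary technical obstacle is the sharp control of the tilted mean $\langle V\rangle$ (equivalently a ratio of contiguous Kummer functions) needed to bring the lower-bound correction down to $1/(2(k-1))$.
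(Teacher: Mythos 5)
Your probabilistic recast of (\ref{root.eqn.poi}), the Beta-mixed-Poisson representation of the tilted law, and the sign-criterion localization of $\gamma_k$ (which indeed avoids any monotonicity of $\langle\psi\rangle$ in $x$) are all sound, and your concentration argument for the limit $\gamma_k/k\to e^{1/\theta}-1$ is a viable alternative to the paper's squeeze. But the two main bounds have a genuine gap, and it traces to one missing idea: you never decouple the tilt. The paper's proof hinges on Chebyshev's ``other'' (covariance) inequality, Proposition \ref{lem.expectation}: since $\phi$ is decreasing and $\varphi$ is increasing in the first argument (Lemma \ref{lem.convex.concave.varphi}), applying parts (ii)--(iii) of that proposition to the stochastically ordered family $\xi_{\gamma_k-\epsilon}\stochord\xi_{\gamma_k}\stochord\xi_{\gamma_k+\epsilon}$ and letting $\epsilon\to0$ converts the root equation $\theta\,\mE{\phi(\xi_{\gamma_k})\varphi(\xi_{\gamma_k})}=\mE{\phi(\xi_{\gamma_k})}$ into the exact \emph{untilted} identity $\mE{\varphi(\xi_{\gamma_k})}=1/\theta$. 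After that, every estimate concerns a plain Poisson expectation: Jensen gives one side, the Poisson Edmundson--Madansky bound (Theorem \ref{lem.upperbound.concave}) gives the other, and Lemma \ref{lem.bounds.varphi} turns both into the stated constants. In your scheme all estimates must instead be carried out under the $\phi$-tilted measure, and that is where the argument breaks rather than merely becomes harder.

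Concretely, two steps fail as written. First, for the upper bound on $\gamma_k$ you invoke the Poisson EM inequality to lower-bound $\langle\psi(M)\rangle$; but Theorem \ref{lem.upperbound.concave} is proved for a Poisson variable, whereas under your tilt $M$ is a mixture of Poissons over the tilted Beta variable $V$, not Poisson. Conditioning on $V$ and applying the theorem to $\mathrm{Poisson}(xV)$ does not directly rescue this: the bound requires mean $xV>1$, which fails on a $V$-region you must handle separately, and you must then integrate $\varphi(xV-1)$ against the tilted Beta law --- exactly the Kummer-ratio control you defer. Second, for the lower bound on $\gamma_k$ you need a quantitative estimate of $\langle V\rangle$ (a ratio of contiguous Kummer functions) sharp enough to produce the $\tfrac{1}{2(k-1)}$ correction; you call this ``secondary,'' but it is load-bearing: with the only free bound $\langle V\rangle\le 1$, your Jensen step yields $\gamma_k\ge(k-\tfrac12)(e^{1/\theta}-1)$, which is smaller than the stated bound $(k-1)\bigl(e^{1/\theta+\frac{1}{2(k-1)}}-1\bigr)$ by at least $\tfrac12$, so your route as described does not recover (\ref{poisson.root.bounds}). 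In short, both constants in the theorem are asserted rather than derived in your proposal, whereas with the decoupling step they fall out of untilted Poisson estimates in a few lines.
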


\noindent
To prove this theorem, we require new notation and several supporting results; some of which are of independent interest.  We begin by defining two functions. The first is a ratio of two Pochammer symbols and the second is a generalised harmonic number.
\begin{equation}\label{def.varphi}
\begin{dcases}
	\phi(n, k, \theta)&\coloneqq \frac{(k)_n}{(k+\theta)_n}, ~~~ \phi(0, k, \theta)=1, \\
	\varphi(x, k) &\coloneqq   \psi(x + k ) -\psi(k), ~~~
\end{dcases}
\end{equation}
where $n, k$ are nonnegative integers,  $x$ is a nonegative real number,  $\theta$ is a positive real number and $\psi$ is the logarithmic derivative of the Gamma function. For brevity, we will often omit  the parameters $k$ and $\theta$ when they are clear from the context. Thus,  $\phi (n)$ and $\varphi(x)$ stand for $\phi(n, k, \theta)$ and $\varphi(x, k)$ respectively. 
\vskip 5pt
\noindent
Our first lemma highlights some features of the functions $\phi$ and $\varphi$.
\begin{lemma}\label{lem.convex.concave.varphi}
	Given a fixed nonnegative integer  $k$ and a positive real number $\theta$, the functions $\varphi(\cdot, k, \theta)$ and $\phi(\cdot, k)$ have the following properties:
	\begin{itemize}
		\item [\rm(i)] $\phi(\cdot, k, \theta)$ is log-convex and decreasing.
		\item [\rm(ii)] $ 	\varphi(\cdot, k) $ is concave and increasing.
	\end{itemize}
	
\end{lemma}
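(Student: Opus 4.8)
The plan is to reduce each of the four assertions to the sign of a first or second derivative and then read those signs directly off the standard series representations of the polygamma functions. Writing $\psi$ for the digamma function, I would recall that for $z>0$
\[
\psi'(z)=\sum_{n\ge0}\frac{1}{(z+n)^2}>0,\qquad \psi''(z)=-2\sum_{n\ge0}\frac{1}{(z+n)^3}<0,
\]
so that $\psi$ is increasing and $\psi'$ is decreasing on $(0,\infty)$. These two facts do all the work, and I shall keep $k\ge1$ so that every argument of $\psi$ and $\psi'$ below lies in $(0,\infty)$.

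Part (ii) is then immediate. Since $k$ is fixed, $\psi(k)$ is constant in $x$, whence $D_x\varphi(x,k)=\psi'(x+k)>0$ and $D_x^2\varphi(x,k)=\psi''(x+k)<0$. The first inequality gives that $\varphi(\cdot,k)$ is increasing and the second that it is concave.

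For part (i) I would first pass to a continuous argument by writing $(k)_n=\Gamma(k+n)/\Gamma(k)$, so that $\phi$ extends to
\[
\phi(x,k,\theta)=\frac{\Gamma(k+\theta)}{\Gamma(k)}\cdot\frac{\Gamma(k+x)}{\Gamma(k+\theta+x)},\qquad x\ge0,
\]
a strictly positive function whose restriction to the integers is the original sequence. Taking logarithms removes the constant prefactor and yields $\log\phi=\mathrm{const}+\log\Gamma(k+x)-\log\Gamma(k+\theta+x)$, so that
\[
D_x\log\phi=\psi(k+x)-\psi(k+\theta+x),\qquad D_x^2\log\phi=\psi'(k+x)-\psi'(k+\theta+x).
\]
Because $\theta>0$ we have $k+\theta+x>k+x$; monotonicity of $\psi$ then makes the first difference negative and monotonicity of $\psi'$ makes the second positive. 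Hence $D_x\log\phi<0$, so $\phi$ is decreasing, while $D_x^2\log\phi>0$, so $\log\phi$ is convex, i.e.\ $\phi$ is log-convex. The decreasing part may alternatively be quoted verbatim from Lemma~\ref{lem.gamma.sign} with $c=\theta$, since $\phi$ is a positive multiple of $\Gamma(k+x)/\Gamma(k+x+\theta)$; one could also note directly that $\phi(n,k,\theta)=\prod_{j=0}^{n-1}(k+j)/(k+\theta+j)$ is a product of factors in $(0,1)$. Finally, log-convexity and monotonicity of the continuous extension pass to its restriction to the integers, which is what the statement requires.

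I do not expect a genuine obstacle here: once $\phi$ is realised as the restriction of the Gamma-ratio above and $\varphi$ is recognised as a shift of the digamma, everything collapses to the two displayed polygamma inequalities. The only points needing a little care are bookkeeping ones, namely legitimising the continuous extension of $\phi$ so that \emph{log-convex} and \emph{decreasing} carry their usual calculus meaning, and keeping $k\ge1$ so that the arguments of $\psi$ and $\psi'$ remain in $(0,\infty)$.
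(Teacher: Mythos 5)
Your proposal is correct, and it splits naturally into two comparisons. For part (ii) you and the paper do essentially the same thing: the paper simply cites that $\psi$ is increasing and concave, which is exactly what your two polygamma series give, so there is no real difference beyond your making the derivatives explicit. For part (i), however, you take a genuinely different route. The paper never leaves the integers: it verifies discrete log-convexity by the two-line Pochhammer computation $\frac{\phi(n,k,\theta)}{\phi(n+1,k,\theta)}-\frac{\phi(n-1,k,\theta)}{\phi(n,k,\theta)}=\frac{\theta}{k+n}-\frac{\theta}{k+n-1}<0$, and gets monotonicity from $\frac{\phi(n,k,\theta)}{\phi(n+1,k,\theta)}=\frac{k+\theta+n}{k+n}>1$ — purely algebraic, no calculus, no extension to legitimise, and it works directly with the discrete object that later enters the Poisson expectations. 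You instead realise $\phi$ as the restriction of the Gamma-ratio $\frac{\Gamma(k+\theta)}{\Gamma(k)}\cdot\frac{\Gamma(k+x)}{\Gamma(k+\theta+x)}$, differentiate $\log\phi$ twice, and read off the signs from $\psi$ increasing and $\psi'$ decreasing; restriction to integers then yields the discrete statements (midpoint convexity of the continuous $\log\phi$ gives $\phi(n)^2\le\phi(n-1)\phi(n+1)$). Your route buys uniformity — all four claims collapse to polygamma signs — plus the slightly stronger fact that the continuous interpolation itself is log-convex and decreasing, and your remark that the decreasing part is Lemma~\ref{lem.gamma.sign} with $c=\theta$ ties it neatly to the paper's own machinery; the paper's route is more elementary and shorter. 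The one point you omit is $k=0$: the lemma allows a nonnegative integer $k$, and while $\varphi(\cdot,0)$ is undefined in any case, $\phi(\cdot,0,\theta)$ is the degenerate sequence $1,0,0,\ldots$, which the paper covers by calling it obvious; your explicit restriction to $k\ge1$ misses only this trivial case, so it is a cosmetic rather than a genuine gap.
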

\begin{proof}
	Fix $\theta > 0$ and $k \in \{0,1, \ldots\}$. 
	To prove the first assertion, it suffices to show that for any nonnegative integer $n$, we have
	$ \phi^2(n, k, \theta) \le \phi(n+1,k, \theta)\phi(n-1,k, \theta) $.  The case $k=0$ is obvious. When $k > 0$, the result is easily seen by noting that
	\begin{alignat*}{2}
		\frac{\phi(n, k, \theta)}{\phi(n+1, k, \theta)} - \frac{\phi(n-1, k, \theta)}{\phi(n, k, \theta)} & = 
		\frac{(k)_n / (k+\theta)_n}{ (k)_{n+1} / (k+\theta)_{n+1}} -\frac{(k)_{n-1} / (k+\theta)_{n-1}}{ (k)_{n} / (k+\theta)_{n}}\\
		&=\frac{\theta}{k+n}-\frac{\theta}{k+n-1} < 0.
	\end{alignat*}
	The monotonicity of $\phi$ in the first parameter is clear from the ratio
	\begin{alignat*}{2}
		\frac{\phi(n, k)}{\phi(n+1, k)} &= \frac{(k)_n / (k+\theta)_n}{ (k)_{n+1} / (k+\theta)_{n+1}}
		&=\frac{k + \theta + n}{ k +n} > 1.
	\end{alignat*}	
	For part {\rm(ii)}, the concavity of $\varphi$ and its monotonicity follow from that of $\psi$. 
	This completes the proof
\end{proof}

\vspace*{5pt}
\noindent Our second lemma establishes bounds for the function $\varphi$. 
\begin{lemma}\label{lem.bounds.varphi}
	The relation
	\begin{alignat}{2}\label{varphi.ineq}
		\log\left(\frac{x+ k-1}{k-1} \right) - \frac{1}{2(k-1)} < \varphi (x, k)  <  \log\left(\frac{x + k}{k-1} \right),
	\end{alignat}	
	holds for  $k \in \{ 2, 3, \ldots\}$ and $x \in [0, \infty)$.
\end{lemma}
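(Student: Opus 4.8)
The plan is to reduce both inequalities in \eqref{varphi.ineq} to the classical two-sided bounds for the digamma function,
\[ \log y - \frac{1}{y} < \psi(y) < \log y - \frac{1}{2y}, \qquad y > 0, \qquad (\star) \]
together with the recurrence $\psi(y+1) = \psi(y) + 1/y$ and the elementary estimate $-\log(1 - 1/y) > 1/y$, equivalently $\log\frac{y}{y-1} > \frac1y$, valid for every $y > 1$. Writing $\varphi(x,k) = \psi(x+k) - \psi(k)$, the idea is to bound the two digamma values by logarithms with suitably shifted arguments and to absorb the correction terms into the prefactor $k-1$ using the recurrence. Since $k \ge 2$ and $x \ge 0$, every argument appearing below exceeds $1$, so $(\star)$ applies throughout and all resulting inequalities are strict.

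For the \emph{upper bound} I would argue as follows. The upper half of $(\star)$ gives $\psi(x+k) < \log(x+k)$. The lower half of $(\star)$ at $y = k$ gives $\psi(k) > \log k - \frac1k$, and since $\log k - \frac1k > \log(k-1)$ (this is precisely $\log\frac{k}{k-1} > \frac1k$), we obtain $\psi(k) > \log(k-1)$. Subtracting,
\[ \varphi(x,k) = \psi(x+k) - \psi(k) < \log(x+k) - \log(k-1) = \log\frac{x+k}{k-1}, \]
which is the asserted upper bound.

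For the \emph{lower bound} the key observation is that $\psi(y) > \log(y-1)$ for all $y > 1$: the lower half of $(\star)$ gives $\psi(y) > \log y - \frac1y$, and $\log y - \frac1y > \log(y-1)$ is again $\log\frac{y}{y-1} > \frac1y$. Applying this at $y = x+k$ yields $\psi(x+k) > \log(x+k-1)$. For the subtracted term I would shift via the recurrence, $\psi(k) = \psi(k-1) + \frac{1}{k-1}$, and then apply the upper half of $(\star)$ at $y = k-1$ to get $\psi(k-1) < \log(k-1) - \frac{1}{2(k-1)}$, whence $\psi(k) < \log(k-1) + \frac{1}{2(k-1)}$. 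Combining the two estimates,
\[ \varphi(x,k) = \psi(x+k) - \psi(k) > \log(x+k-1) - \log(k-1) - \frac{1}{2(k-1)} = \log\frac{x+k-1}{k-1} - \frac{1}{2(k-1)}, \]
as required; the boundary case $x=0$ (where $\varphi=0$) is consistent, the bounds then reading $-\frac{1}{2(k-1)} < 0 < \log\frac{k}{k-1}$.

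The only genuinely nontrivial ingredient is the two-sided estimate $(\star)$; everything else is exact bookkeeping of unit shifts. I would either cite $(\star)$ as a classical fact or, to keep the section self-contained, derive it quickly from $\psi'(y) = \sum_{n\ge 0}(y+n)^{-2}$ by comparing this series with $\int (y+t)^{-2}\,\mathrm{d}t$ (midpoint and trapezoidal estimates), integrating back and fixing the additive constant through $\psi(y) - \log y \to 0$. The step most prone to an off-by-one slip—and hence the main obstacle—is that the upper bound of $(\star)$ must be applied at the \emph{shifted} point $k-1$ rather than at $k$, so that the recurrence manufactures exactly the correction $+\frac{1}{2(k-1)}$ matching the stated constant; applying $(\star)$ directly at $k$ would yield a correction with the wrong denominator.
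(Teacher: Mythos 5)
Your proof is correct and takes essentially the same route as the paper's: both reduce the claim to the two-sided digamma estimate $\log y - \tfrac{1}{y} \le \psi(y) \le \log y - \tfrac{1}{2y}$ (relation 3.6.55 in Mitrinovi\'c) combined with the recurrence $\psi(y+1)=\psi(y)+\tfrac{1}{y}$, arriving at the same intermediate bounds $\psi(x+k)>\log(x+k-1)$, $\psi(k)>\log(k-1)$, and $\psi(k)<\log(k-1)+\tfrac{1}{2(k-1)}$. The only difference is bookkeeping: where you invoke the elementary inequality $\log\tfrac{y}{y-1}>\tfrac{1}{y}$ to absorb the $1/y$ correction, the paper shifts the argument via the recurrence so that the correction cancels exactly.
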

\begin{proof}
	Recall that 
	\begin{alignat*}{2}
		\frac{x}{x+1} &< \log(1+x) < x, &  x > 0,\phantom{ \text{  and }}\\
		\psi(x+1) &= \psi(x) + \frac{1}{x}& \quad x > 0, \text{ and }\\
		\log(x) -\frac{1}{x} &\le \psi(x) \le \log(x) -\frac{1}{2x},  &\quad x > 1 \qquad & \text{(relation 3.6.55 in \cite{Mitrinovic})}.
	\end{alignat*}
For the upper bound, 	
	\begin{alignat*}{2}
		\varphi(x, k)&\coloneqq \psi(x + k) - \psi(k-1) - \frac{1}{k-1} \\
		&\le \log(x + k) -\frac{1}{2(x+k)} - \log(k-1) \nonumberj
		&<   \log\left(\frac{x + k}{k-1} \right).
	\end{alignat*}	
The lower bound is derived in the same way
	\begin{alignat*}{2}
		\varphi(x, k) &\coloneqq \psi(x + k-1) + \frac{1}{x+k-1} - \psi(k-1) - \frac{1}{k-1} \\
		&\ge \log\left(\frac{x + k-1}{k-1} \right) +\frac{1}{2(k-1)}- \frac{1}{k-1}  \nonumberj
		&\ge   \log\left(\frac{x + k-1}{k-1} \right) - \frac{1}{2(k-1)}.
	\end{alignat*}	
		which is precisely our claim.
\end{proof}
\begin{rem}
In the case of an integer parameter $m > 0$ for $\varphi$, we may consider the elementary relation
$$
\log(j+1)-\log(j)=\int_j^{j+1} \frac{\dx[t]}{t}<\frac{1}{ j}<\int_{j-1}^j \frac{\dx[t]}{t}=\log(j)-\log(j-1),
$$
valid for $j \ge 2$. 
Summing these inequalities  from $j=k$ to $j=m + k -1 $, we see at once that
\begin{alignat*}{2}
	\log\left(\frac{m+ k}{k} \right)<	\varphi (m)   < \log\left(\frac{m+ k -1}{k-1} \right)
\end{alignat*}	
Ciesielski and Zabcyk relied on this approximation to derive the lower bound of the roots.
\end{rem}
\vskip 5pt
\noindent
We will now introduce the Edmundson-Madansky (EM) inequality, which plays a crucial role in our later arguments
\begin{thm}[EM Inequality]\label{thm.EM}
	Let $X$ be an integrable random variable with support within $[a, b] \subset \mr$. Assume that $f$ is a measurable real function with existing and finite expectation $\mE{f(X)} < \infty$. If $f$ is concave on $[a, b]$, then 
	\begin{equation*}
		\mE{f(X)} - f(a)  \ge (\mE{X}-a)\frac{f(b) - f(a)}{b-a}.
	\end{equation*}	 
\end{thm}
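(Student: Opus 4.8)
The plan is to use the one defining geometric feature of a concave function: on any interval its graph lies on or above the chord joining the endpoints. Integrating this pointwise minorant against the law of $X$ then produces the asserted bound after a short algebraic rearrangement. Throughout I assume $a < b$; the degenerate case $a = b$ forces $X \equiv a$ almost surely, so both sides of the claimed inequality vanish.

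First I would express each $x \in [a,b]$ as the convex combination
$$x = \frac{b-x}{b-a}\,a + \frac{x-a}{b-a}\,b,$$
noting that the weights $(b-x)/(b-a)$ and $(x-a)/(b-a)$ are nonnegative and sum to one. Concavity of $f$ on $[a,b]$ then yields the pointwise lower bound
$$f(x) \ge \frac{b-x}{b-a}\,f(a) + \frac{x-a}{b-a}\,f(b), \qquad x \in [a,b].$$
The right-hand side is affine in $x$, which is exactly what makes the estimate useful: upon substituting the random variable $X$—permissible since $X \in [a,b]$ almost surely—the bound involves $X$ only linearly.

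Next I would take expectations. The affine minorant is integrable because $X$ is, and $\mE{f(X)}$ is finite by hypothesis, so monotonicity of expectation preserves the inequality and gives
$$\mE{f(X)} \ge \frac{b-\mE{X}}{b-a}\,f(a) + \frac{\mE{X}-a}{b-a}\,f(b).$$
Finally I would subtract $f(a)$ from both sides and collect terms: since $\tfrac{b-\mE{X}}{b-a} - 1 = -\tfrac{\mE{X}-a}{b-a}$, the right-hand side collapses to $(\mE{X}-a)\,\tfrac{f(b)-f(a)}{b-a}$, which is precisely the claim. I do not expect a genuine obstacle here; the argument is just the reversed (concave) form of the classical chord estimate underlying the Edmundson--Madansky inequality. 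The only points demanding care are purely technical, namely verifying integrability of the affine minorant so that expectations may be taken term by term, and dispatching the degenerate interval $a = b$, neither of which poses real difficulty.
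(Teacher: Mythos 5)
Your argument is correct and complete. One thing to be aware of: the paper does not actually prove Theorem \ref{thm.EM} at all---it states the inequality and refers the reader to \cite{Madansky, Gassmann, Donohue}---so there is no internal proof to compare yours against; what you have written is the standard chord-minorant argument found in those references, and it would serve as a self-contained proof the paper omits. Concretely, you use concavity to place $f$ on or above the secant line through $(a,f(a))$ and $(b,f(b))$, note that this affine minorant of $f(X)$ is integrable because $X$ is, take expectations, and rearrange via $\frac{b-\mE{X}}{b-a}-1=-\frac{\mE{X}-a}{b-a}$. All steps check out, including the two technical points you flag: the integrable affine lower bound gives $\mE{f(X)}>-\infty$, which combined with the hypothesis $\mE{f(X)}<\infty$ makes the expectation finite so that monotonicity of expectation applies; and the degenerate case $a=b$ forces $X\equiv a$ almost surely (strictly speaking the statement presupposes $a<b$, since otherwise the right-hand side involves division by zero).
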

For a comprehensive overview of this inequality, its extensions, and applications, see for instance \cite{Madansky, Gassmann, Donohue}.

\vspace*{5pt}

\noindent
Our next supporting result is an extension of the EM inequality to Poisson random variables, which takes into account the monotonicity of $\varphi$. Recall that the original EM inequality applies to random variables with bounded support. Grassmann and Ziemba (1986) studied a more general case of unbounded random variables but imposed a linear growth condition on the target function. Throughout the rest of this paper, $b(m, p)$ designates the binomial distribution with parameters $m \in \{1, 2, \ldots\}$ and $p \in [0,1]$. 

\begin{thm}\label{lem.upperbound.concave} 
	The function  $\varphi$, defined in (\ref{def.varphi}), satisfies the following inequalities  for a Poisson random variable $X$ with mean $\gamma > 1$
	\begin{equation}\label{exp.varphi.ineq1}
	\varphi(\gamma) \ge  \mE{\varphi(X)} \ge (1-e^{-1}) \varphi(\gamma-1).
		\end{equation}
Specifically, 
	\begin{equation}\label{exp.varphi.ineq2}
\mE{\varphi(X)} \ge  \frac{\mE{Y} }{\gamma+1}\varphi(\gamma),
\end{equation}
where $Y$ is a random variable with distribution 
\begin{equation}\label{def.truncated.poi}
	\prob(Y=k) \coloneqq \begin{dcases}
		\frac{\gamma^ke^{-\gamma}}{k!} &\qquad k \in [1, m+1], \\
		0 &\qquad  k > m+2,\\		
		e^{-\gamma} + \sum_{j =m+1}^{\infty} \frac{\gamma^je^{-\gamma}}{j!} &\qquad k=0,
	\end{dcases}
\end{equation}
having parameter $\gamma$.
\end{thm}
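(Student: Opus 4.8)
The plan is to treat the three inequalities in turn, the first being immediate and the other two requiring us to circumvent the unboundedness of the Poisson support. For the upper bound $\varphi(\gamma)\ge\mE{\varphi(X)}$ I would simply invoke Jensen's inequality: by Lemma~\ref{lem.convex.concave.varphi}\,(ii) the map $\varphi(\cdot,k)$ is concave, so $\mE{\varphi(X)}\le\varphi(\mE{X})=\varphi(\gamma)$. The two remaining estimates are genuine lower bounds, and here the difficulty is that Theorem~\ref{thm.EM} requires a \emph{bounded} support whereas $X$ ranges over all of $\{0,1,2,\dots\}$. The task is to tame the tail without losing too much mass, and the recurring device is the boundary value $\varphi(0,k)=\psi(k)-\psi(k)=0$, which renders the atom at the origin harmless.

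For the explicit bound \eqref{exp.varphi.ineq2} I would argue by truncation and coupling. Let $Y$ be the variable in \eqref{def.truncated.poi}, obtained from $X$ by relocating all the mass above the truncation level onto the atom at $0$; concretely $Y=X\,\1{\{X\le m+1\}}$, so that $Y\le X$ pointwise. Since $\varphi$ is increasing (Lemma~\ref{lem.convex.concave.varphi}\,(ii)) and nonnegative with $\varphi(0)=0$, this coupling gives $\mE{\varphi(Y)}\le\mE{\varphi(X)}$. Now $Y$ is supported on $[0,m+1]$, so Theorem~\ref{thm.EM} applies to the concave $\varphi$ on $[0,m+1]$ with left endpoint $a=0$; because $\varphi(0)=0$ the first term vanishes and the EM inequality collapses to the chord bound $\mE{\varphi(Y)}\ge\frac{\mE{Y}}{m+1}\,\varphi(m+1)$. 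Finally, concavity together with $\varphi(0)=0$ makes $x\mapsto\varphi(x)/x$ nonincreasing; for a truncation level with $m+1\le\gamma+1$ this yields $\varphi(m+1)/(m+1)\ge\varphi(\gamma+1)/(\gamma+1)\ge\varphi(\gamma)/(\gamma+1)$, and chaining the three displays produces \eqref{exp.varphi.ineq2}.

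For the universal lower bound $\mE{\varphi(X)}\ge(1-e^{-1})\varphi(\gamma-1)$ I would exploit concavity more sharply, so as to avoid the loss incurred by collapsing the whole tail to zero. Two elementary consequences of concavity and $\varphi(0)=0$ — namely $\varphi(x)\ge\frac{x}{\gamma-1}\varphi(\gamma-1)$ for $x\le\gamma-1$ and $\varphi(x)\ge\varphi(\gamma-1)$ for $x\ge\gamma-1$ — merge into the single pointwise estimate $\varphi(x)\ge\frac{\varphi(\gamma-1)}{\gamma-1}\min(x,\gamma-1)$, valid for all $x\ge0$ since $\gamma>1$. Taking expectations reduces the claim to the purely probabilistic inequality $\mE{\min(X,\gamma-1)}\ge(1-e^{-1})(\gamma-1)$ for $X\sim\mathrm{Poisson}(\gamma)$. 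I would establish this by writing $\mE{\min(X,\gamma-1)}=\gamma-\mE{(X-(\gamma-1))_+}$ and estimating the Poisson right tail. It is instructive that for $\gamma\in(1,2]$ the ratio $\mE{\min(X,\gamma-1)}/(\gamma-1)$ equals exactly $1-e^{-\gamma}$, so the extremal regime is the limit $\gamma\downarrow1$, where it tends to $1-e^{-1}$; this is precisely the origin of the universal constant.

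The main obstacle is the unbounded support: Theorem~\ref{thm.EM} cannot be applied to $X$ directly, and everything hinges on a truncation mechanism that (a) decreases $\mE{\varphi(X)}$ in a controlled way via a monotone coupling and (b) retains enough mass to recover sharp constants. The genuinely delicate point is the constant $1-e^{-1}$ in \eqref{exp.varphi.ineq1}: the crude tail–collapsing used for \eqref{exp.varphi.ineq2} loses a factor of order $\tfrac12$ as $\gamma\to\infty$, so the sharp bound really does require the $\min(x,\gamma-1)$ comparison together with the observation that the worst case is $\gamma\downarrow1$ rather than $\gamma\to\infty$.
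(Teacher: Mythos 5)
Your upper bound and your treatment of (\ref{exp.varphi.ineq2}) are correct, and the latter is essentially the paper's own argument: your $Y=X\,\1{\{X\le m+1\}}$ is exactly the variable (\ref{def.truncated.poi}), the pointwise coupling gives $Y\stochord X$, and the EM inequality on $[0,m+1]$ with $\varphi(0)=0$ gives the chord bound $\mE{\varphi(Y)}\ge \mE{Y}\,\varphi(m+1)/(m+1)$; your final passage from $m+1$ to $\gamma+1$ via the nonincreasing map $x\mapsto\varphi(x)/x$ is only cosmetically different from the paper's direct comparison $\varphi(m+1)\ge\varphi(\gamma)$, $m+1\le\gamma+1$.

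The genuine gap is in the lower bound of (\ref{exp.varphi.ineq1}), which is the delicate constant. Your reduction $\varphi(x)\ge\frac{\varphi(\gamma-1)}{\gamma-1}\min(x,\gamma-1)$ is valid, but it leaves the purely probabilistic claim $\mE{\min(X,\gamma-1)}\ge(1-e^{-1})(\gamma-1)$ for \emph{every} $\gamma>1$, and you only prove it on $(1,2]$, where it is the exact identity $1-e^{-\gamma}$. For $\gamma>2$ you offer a plan (``estimate the Poisson right tail''), not a proof, and the estimate is not routine: writing $\mE{\min(X,\gamma-1)}=\gamma-\mE{(X-\gamma+1)_+}$, the required bound is $\mE{(X-\gamma+1)_+}\le 1+(\gamma-1)/e$, and crude moment bounds fail at moderate $\gamma$ (e.g.\ Cauchy--Schwarz gives $\mE{(X-\gamma+1)_+}\le\sqrt{\gamma+1}\approx 2.24$ at $\gamma=4$, exceeding $1+3/e\approx 2.10$); one needs a sharper tail or mean-absolute-deviation estimate together with a case analysis in $\gamma$. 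The claim is in fact true, so the gap is fillable, but as written the constant $1-e^{-1}$ is not established. The paper avoids this entirely by a different coupling: for $\gamma\in(m,m+1]$ it builds $Y_1\sim\Binomial(m,1-e^{-1})$ and invokes the Klenke--Mattner criterion $(1-p)^m=e^{-m}\ge e^{-\gamma}$ to get $Y_1\stochord X$, then applies the EM inequality to $\varphi$ on $[0,m]$, yielding $\mE{\varphi(X)}\ge\mE{\varphi(Y_1)}\ge(1-e^{-1})\varphi(m)\ge(1-e^{-1})\varphi(\gamma-1)$ in two lines. Note that this same coupling would close your gap — EM applied to the concave, nondecreasing function $\min(\cdot,\gamma-1)$ with $Y_1$ gives precisely $\mE{\min(X,\gamma-1)}\ge(1-e^{-1})(\gamma-1)$ — but at that point your $\min$-reduction is redundant, since one may apply EM to $\varphi$ itself.
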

\begin{proof}
For the first inequality, the upper bound is obvious from the concavity and monotonicity of $\varphi$ and the direct application of Jensen's inequality. To establish the lower bound, we proceed in two steps. First, we construct a coupling between $X$ and a suitably chosen Bernoulli random variable that is stochastically smaller. Second,  we apply Edmundson-Madansky inequality to the Binomial variable. Suppose that the mean of $X$ is $\gamma  \in (m, m+1]$, where $m$ is a positive integer. Let $Y_1 = (X_1 \wedge 1) + \cdots + (X_{m} \wedge 1) $ where $X_1, \ldots, X_{m}$ are independent unit-rate Poisson random variables.  Clearly,  $Y_1 \sim \Binomial(m, 1-e^{-1})$ as $(X_1 \wedge 1)$ is Bernoulli with success probability $p=1-e^{-1}$. We want to show that $Y_1 \stochord X$,  where $\stochord$ denotes the usual stochastic order. Instead of using $X$, introduce  $X^\prime \coloneqq X_1+\ldots+X_{m}$, which has mean $m < \lambda$.  In this case, the coupling is between $X^\prime$ and $Y_1$. Given that $X^\prime \stochord X$ (as $m < \lambda$),  it is clear that we need only  verify that $Y_1 \stochord X^\prime$ for $Y_1 \stochord X$ to be true. However,   to prove the former, the necessary and sufficient condition is that $(1-p)^{m} \ge  e^{-\gamma}$ (see \cite{Klenke}). This requirement is clearly met for  $p = 1-e^{-1}$ as $\gamma > m$. 

For the next step, remember  that $\varphi$ is concave according to assertion (\rm{ii}) of Lemma \ref{lem.convex.concave.varphi}. Since  $Y_1$ has support in $[0,m]$, we can apply the EM inequality to obtain	\begin{alignat*}{2}
\mE{\varphi(Y_1)}- \varphi(0)  &\ge (\mE{Y_1}-0)\frac{\varphi(m) - \varphi(0)}{m}.
	\end{alignat*}	
Rearranging the terms and recalling that $\varphi$ is increasing, $\varphi(0)=0$ and $\mE{Y_1} = m(1-e^{-1})$, we arrive at the following inequality after a straightforward manipulation
	\begin{alignat*}{2}
	\mE{\varphi(Y_1)} 
	&\ge  \frac{e-1}{e} \varphi(m) \ge \frac{e-1}{e} \varphi(\gamma-1).
\end{alignat*}	
Finally, use the fact that $\varphi$ is increasing and $X$ is stochastically larger than $Y_1$ to conclude that $\mE{\varphi(X)} \ge \mE{\varphi(Y_1)}$. 

We now turn to the second part of the theorem, let $Y_2$ be a random variable with distribution (\ref{def.truncated.poi}). 
Essentially, the probability mass from the original Poisson distribution that falls outside of  $\{0, 1, \ldots, m +1\}$ is redistributed to the atom $0$. By construction, $Y_2 \stochord X$ is ensured. This can be trivially verified by checking that $\prob(Y_2 > k) \le \prob(X > k) $ for all $k \in \{0, 1, 2, \ldots \}$.  Employing a similar strategy to the concluding steps of the first part, we apply the Edmundson-Madansky inequality to find that
$$\mE{\varphi(X)} \ge  \frac{\mE{Y_2} }{m+1}\varphi(m+1)\ge \frac{\mE{Y_2} }{\gamma+1} \varphi(\gamma).$$%
This concludes the proof.
\end{proof}

\vspace*{5pt}
\noindent
The final supporting result is related to Chebychev's other inequality, published by Andrieve in 1883 \cite{Fink} (see also page 39  in Mitrinovi{\'c} (1970) for additional details). We have adapted this inequality for the scenario involving two stochastically ordered random variables.
\begin{assertion}\label{lem.expectation}
	Let $X, Y: \Omega \to J \subset \mr$ be two independent and integrable random variables defined on a  probability space $(\Omega, {\cal F}, \proba)$. Consider two real-valued and measurable functions $f$ and $g$ defined on $J$, where $\mE{f(X)} < \infty, \mE{f(Y)} < \infty$ and $\mE{g(X)} < \infty, \mE{g(Y)} < \infty$. If $f$ is nondecreasing and $g$ is nonincreasing, then
	\begin{itemize}
		\item [\rm(i)] $\mE{f(X)g(X)} \le \mE{f(X)}\mE{g(X)},$ 
	\end{itemize}
	If in addition  $X \stochord Y$ then
	\begin{itemize}
		\item [\rm(ii)] $\mE{f(X)g(X)} \le \mE{f(Y)}\mE{g(X)},$ and 
		\item [\rm(iii)] $\mE{f(Y)g(Y)} \ge \mE{f(X)}\mE{g(Y)},$ 
	\end{itemize}
\end{assertion}
\begin{proof}
	Assertion {\rm(i)} is the original Chebyshev's other inequality, also called the covariance inequality. We restate the proof of Behboodian (1972) for completeness. Consider the random variable $$Z \coloneqq \left [f(X_1)-f(X_2)\right] \left [g(X_1)-g(X_2)\right], $$
	where $X_1$ and $X_2$ are two independent copies of $X$.  Observe that  $Z \le 0$ almost surely since $f$ is nondecreasing and $g$ is nonincreasing.  Consequently, 
	\begin{alignat*}{2}
		0 & \ge \mE{Z} \\
		&=  \mE{f(X_1)g(X_1)} + \mE{f(X_2)g(X_2)}-\mE{f(X_1)}\mE{g(X_2)} - \mE{f(X_2)}\mE{g(X_1)}\\
		&= 2\mE{f(X)g(X)} - 2\mE{f(X)}\mE{g(X)}.
	\end{alignat*}
	Rearranging the terms in the last equation proves assertion {\rm (i)}.
	
	\vspace*{5pt}
	For the remaining two inequalities, recall that $X \stochord Y$ implies $f(X) \stochord f(Y)$ for $f$ nondecreasing and $g(X) \stochord g(Y)$ for $g$ nonincreasing. Define a new random variable
	\begin{alignat*}{2}
		Y \coloneqq \left[f(X)-f(Y)\right] \left[g(X)-g(Y)\right],
	\end{alignat*} 
	As  $f$ is nondecreasing, $f(X) \stochord f(Y)$. Conversely, $g(Y) \stochord g(X)$ due to the nonincreasing nature of $g$. Together, these imply that $Y\le 0$ almost surely.
	Next, compute the expectation of $Y$ to see that
	\begin{alignat*}{2}
		0 \ge \mE{Y} &=  \mE{f(X)g(X)} + \mE{f(Y)g(Y)}-\mE{f(X)}\mE{g(Y)} - \mE{f(Y)}\mE{g(X)}\\
		&\ge   \mE{f(X)g(X)}  +    \underbrace{\mE{f(Y)g(Y)}-\mE{f(Y)}\mE{g(Y)}}_{\ge 0 \text{ by part } {\rm(i)}} - \mE{f(Y)}\mE{g(X)}\\
		&\ge   \mE{f(X)g(X)} - \mE{f(Y)}\mE{g(X)} 
	\end{alignat*}
	For part {\rm(iii)}, we proceed in much the same way by  substituting $\mE{f(X)}\mE{g(X)}$ for $\mE{f(Y)}\mE{g(X)}$ and the proof is complete.
\end{proof}

\vspace*{0.25cm}
\noindent
We are now in a position to establish the main theorem of this section.
\begin{proof}[Proof of Theorem \ref{thm.bounds.roots}]
	Recall the  derivative of the Pochhammer symbol of a positive real $x$, which is given by
	$$(x)_n = (x)_n\left\{\psi(x+n) - \psi(x)\right\}.$$
	Applying this definition, the derivative in the first parameter of $M(k, k+\theta, x)$ becomes
	\begin{alignat*}{2}
		D_a M(k, k+\theta; x) &= \sum_{n \ge 0} \frac{x^n}{n!} \frac{(k)_n}{(k+\theta)_n}\left\{  \psi(k+n) -\psi(k)  \right\} \nonumberj
		&= \sum_{n \ge 0} \frac{x^n}{n!}   \phi(n)\varphi(n).
	\end{alignat*}
	As a consequence, we may rewrite  equation (\ref{root.eqn.poi}) as $h_k(x) = 0$, where
	\begin{alignat*}{2}
		h_k(x) &= {\cal S}_1(1-x/\lambda,k) - {\cal S}_0(1-x/\lambda,k)\nonumberj
		&=\sum_{n \ge 0}\frac{x^n}{n!}e^{-x}\phi(n)(\theta\varphi(n)-1)\nonumberj
		&= \mE{ \phi(\xi_x) \left\{ \theta \varphi(\xi_x) -1\right\}}, 
	\end{alignat*}
	with $\xi_x$ representing a Poisson random variable with mean $x$. 
	
	Let $\gamma_k$  be the unique root of $h_k(x)=0$.   Under this condition,  $h_k(x)=0$ can be expressed as
	\begin{equation} \label{h.eqn2}
		\frac{1}{\theta} \mE{\phi(\xi_{\gamma_k}) } = \mE{ \phi(\xi_{\gamma_k})\varphi(\xi_{\gamma_k})}.
	\end{equation}
	Given that for any prescribed  $\epsilon \in [0, \gamma_{k})$, $\xi_{\gamma_{k}-\epsilon} \stochord \xi_{\gamma_k} \stochord \xi_{\gamma_{k}+\epsilon} $, we can apply assertion (iii) of Proposition (\ref{lem.expectation}) to the right-hand side of (\ref{h.eqn2}) to determine  that
	\begin{alignat*}{2}
		\frac{1}{\theta} \mE{\phi(\xi_{\gamma_k}) } 
		&\ge \mE{ \phi(\xi_{\gamma_{k}})}\mE{\varphi(\xi_{\gamma_{k}-\epsilon})}. 
	\end{alignat*}
	Therefore, 
	$\displaystyle\mE{\phi(\xi_{\gamma_k-\epsilon}) } \le \frac{1}{\theta}.$
	Repeating this process and applying assertion  {\rm(ii)} of the same Proposition results in 
	$\displaystyle\mE{\varphi(\xi_{\gamma_k+\epsilon}) } \ge \frac{1}{\theta}.$ Taking the limit  as $\epsilon \to 0$, it becomes evident that  $$\mE{ \varphi(\xi_{\gamma_{k}})} = \frac{1}{\theta}.$$
	Subsequently, we apply (\ref{exp.varphi.ineq1})   to bound $\mE{ \varphi(\xi_{\gamma_{k}})}$, leading to the fundamental relationship
	$$ \frac{e }{e-1}\varphi(\gamma_k-1) \le  \frac{1}{\theta} \le  \varphi(\gamma_k).$$
	
	Note that we did not require the assumption $\gamma_k > 1$, as the right-hand side of the inequality (ensured by Jensen's inequality) guarantees that $\gamma_k > 1$ for all $k \ge 2$. 
	Whence, to find bounds for $\gamma_k$, it is sufficient to consider (\ref{varphi.ineq}) which gives
	\begin{alignat*}{2}
		\frac{e -1 }{e} \log\left(\frac{\gamma_k + k-1}{k-1} \right)  \le \frac{1}{\theta} \le \log\left(\frac{\gamma_k+ k-1}{k-1} \right) - \frac{1}{2(k-1)}. 
	\end{alignat*}		
	Performing elementary calculations yields the desired inequality (\ref{poisson.root.bounds}).
	
	Moving on to the second part of the Theorem, first, we apply (\ref{exp.varphi.ineq2}) 
	to $\mE{ \phi(\xi_{\gamma_{k}})}$, which leads to
	$$ \frac{\mE{Y_{\gamma_k}}}{\gamma_k+1}\varphi(\gamma_k) \le  \frac{1}{\theta} \le  \varphi(\gamma_k), $$
	where $Y_{\gamma_k}$ is a random variable with distribution (\ref{def.truncated.poi}) and parameter $\gamma_k$.  Repeating similar steps used at the end of the first part, we obtain
	\begin{alignat*}{2}
		(e^{\textstyle\frac{1}{\theta}+\frac{1}{2(k-1)}}-1)(k-1) \le \gamma_k  \le (e^{\textstyle\frac{1}{\theta}\,\frac{\gamma_k+1}{\mE{Y_{\gamma_k}}}}-1)k-1.		
	\end{alignat*}
	Finally, divide each side of this inequality by $k$ then observe that as  $k \to \infty$, $\gamma_k \to \infty$,  $Y_{\gamma_k} \to \xi_{\gamma_k}$ hence, $\frac{\gamma_k+1}{\mE{Y_{\gamma_k}}} =  \frac{\gamma_k+1}{\gamma_k}\to 1$. Consequently, formula (\ref{exp.varphi.ineq2}) is established.
\end{proof}
\begin{rem}
	In the scenario where $\theta=1$, aside from the works by \cite{CZ, Ciesielski} and \cite{BG}, Bruss (1987) derived the same limit using a probabilistic method. This argument similarly applies to the Karamata-Stirling profile to obtain the limit $e^{1/\theta}-1$.
\end{rem}
\subsection{Numerical results}
The table below presents numerical calculations of lower and upper bounds for roots. The improvement factor for each bound is the ratio of the new bound to the old bound, calculated in the correct order. The overall range improvement factor is the ratio of the initial range (upper bound - lower bound) to the new range.
%
%
%

\begin{table}[H]
	\centering
	\begin{tabular}{cccccccc}
		\toprule
		\textbf{Root number} & \multicolumn{2}{c}{\textbf{Original bounds}} & \multicolumn{2}{c}{\textbf{New bounds}} & \multicolumn{3}{c}{\textbf{Improvement Factor}}  \\[5pt]
		$k$ & Lower & Upper & Lower & Upper & Lower & Upper & Range\\
		\midrule
2 & 1.72 & 48.24 & 2.79 & 3.86 & 1.63 & 12.48 & 43.44 \\
3 & 3.44 & 66.92 & 4.64 & 7.73 & 1.35 & 8.66 & 20.55 \\
4 & 5.15 & 85.61 & 6.41 & 11.59 & 1.24 & 7.38 & 15.51 \\
5 & 6.87 & 104.29 & 8.15 & 15.46 & 1.19 & 6.75 & 13.33 \\
6 & 8.59 & 122.97 & 9.88 & 19.32 & 1.15 & 6.36 & 12.12 \\
7 & 10.31 & 141.65 & 11.61 & 23.19 & 1.13 & 6.11 & 11.35 \\
8 & 12.03 & 160.34 & 13.34 & 27.05 & 1.11 & 5.93 & 10.82 \\
9 & 13.75 & 179.02 & 15.06 & 30.92 & 1.10 & 5.79 & 10.43 \\
10 & 15.46 & 197.70 & 16.79 & 34.78 & 1.09 & 5.68 & 10.13 \\
100 & 170.11 & 1879.18 & 171.47 & 382.59 & 1.01 & 4.91 & 8.10 \\
1000 & 1716.56 & 18693.97 & 1717.92 & 3860.70 & 1.00 & 4.84 & 7.92 \\
		\bottomrule
	\end{tabular}
	\caption{Comparison of Numerical Bounds: Ciesielski-Zabczyk vs. New Bounds.}
\end{table}

\newpage
\appendix
\section*{Appendix}
\begin{proof}[Proof of part \rm(ii) of Theorem \ref{kummer.signn}]
The logarithmic derivative in parameter $c$,	where  $a_1, a_2$ are  constants satisfying  $c > a_1 > 0$ and $c > a_2 > 0$ respectively is given by
\begin{alignat}{2}\label{proofkummer}
	D_c \displaystyle \log\left( \frac{M(a_1, c, x)}{M(a_1, c, x)} \right) &= D_c\log\frac{\Gamma(c-a_2)}{\Gamma(c-a_1)} + \log D_c \frac{\phi(a,c_1,x)}{\phi(a,c_2,x)}.
\end{alignat}
By the elementary Lemma \ref{lem.monotonicity.log},  it is sufficient to show that the signs of the two derivatives of quotients match. 
First, the sign of the derivative of a quotient of two Gamma functions  coincides with $a_1 - a_2$  based on Lemma \ref{lem.gamma.sign}. 

Next, we consider the second logarithmic derivative involving a quotient of $\phi$'s. Use Lemma \ref{lem.monotonicity.log} once more to switch back to  the derivative of the quotient  $\displaystyle {\phi(a_1,c,x)}/{\phi(a_2,c,x)}$. Given that the denominator in the expression of the derivative of a quotient is positive, we restrict our investigation to the numerator, which we denote $\phi_c$.

In much the same way as in the proof of part {\rm(i)}, consider the Eulerian integral representation (\ref{kummer.integral2}) to express $\phi_c$ as a double integral, 
where $c > a_1 > 0$ and $c > a_2 > 0$ respectively
\begin{alignat*}{2}
	&\phi_c \coloneqq  -\int_{0}^1 e^{xu}(1-u)^{c-2}\left( \frac{u}{1-u}\right)^{a_1-1} \log (1-u)\dx[u]\int_{0}^1 e^{xv}(1-v)^{c-2} \left( \frac{v}{1-v}\right)^{a_2-1}  \dx[v]
	\\&\qquad +		\int_{0}^1 e^{xu}(1-u)^{c-2}\left( \frac{u}{1-u}\right)^{a_1-1} \dx[u]\int_{0}^1 e^{xv}(1-v)^{c-2} \left( \frac{v}{1-v}\right)^{a_2-1}  \log (1-v)\dx[v]\\
	&=\iint_{[0,1]\times[0,1]} e^{x(u+v)}[(1-u)(1-v)]^{c-2}\left( \frac{uv}{(1-u)(1-v)}\right)^{a_1-1} \left( \frac{v}{1-v}\right)^{a_2-a_1} \log (1-v)\dx[u] \dx[v]
	\\&\qquad -\iint_{[0,1]\times[0,1]} e^{x(u+v)}[(1-u)(1-v)]^{c-2}\left( \frac{uv}{(1-u)(1-v)}\right)^{a_1-1} \left( \frac{v}{1-v}\right)^{a_2-a_1} \log (1-u)\dx[u] \dx[v]
\end{alignat*}			
\begin{alignat*}{2}		
	&=\iint_{[0,1]\times[0,1]} \left\{ e^{x(u+v)}[(1-u)(1-v)]^{c-2}\left( \frac{uv}{(1-u)(1-v)}\right)^{a_1-1}\right\} \left( \frac{v}{1-v}\right)^{a_2-a_1} \log \left(\frac{1-v}{1-u}\right)\dx[u] \dx[v]\\
	&=\iint_{0 < v < u < 1} \left\{ \cdots\right\} \left( \frac{v}{1-v}\right)^{a_2-a_1} \log \left(\frac{1-v}{1-u}\right)\dx[u] \dx[v] + \iint_{0 < u < v < 1} \left\{ \cdots\right\} \left( \frac{v}{1-v}\right)^{a_2-a_1} \log \left(\frac{1-v}{1-u}\right)\dx[u] \dx[v]\\
	&=\iint_{0 < v < u < 1} \left\{ \cdots\right\} \left( \frac{v}{1-v}\right)^{a_2-a_1} \log \left(\frac{1-v}{1-u}\right)\dx[u] \dx[v] + \iint_{0 < v < u < 1} \left\{ \cdots\right\} \left( \frac{u}{1-u}\right)^{a_2-a_1} \log \left(\frac{1-u}{1-v}\right)\dx[u] \dx[v]\\
	&=\iint_{0 < v < u < 1} \left\{ \cdots\right\}\left\{ \left( \frac{v}{1-v}\right)^{a_2-a_1} - \left( \frac{u}{1-u}\right)^{a_2-a_1} \right\}  \log \left(\frac{1-v}{1-u}\right)\dx[u] \dx[v].
\end{alignat*}
Mirroring the case of the derivative in parameter $a$, the term  $\displaystyle\log\left( \frac{1-v}{1-u} \right)$ is positive in the domain of integration, where $u > v$. It follows that the sign of the last integrand coincides with the sign of $$\displaystyle\left( \frac{v}{1-v}\right)^{a_2-a_1} - \left( \frac{u}{1-u}\right)^{a_2-a_1},$$ whose sign precisely matches that of $c_1 - c_2$. By the second part of Lemma \ref{lem.monotonicity.log}, this concludes the proof as the sign of both terms in the right-hand side of (\ref{proofkummer}) matches  $c_1 - c_2$. 
\end{proof}


\begin{thebibliography}{99}
\bibitem{AnoAndo}  Ano, K. and Ando, M. (2000). A note on Bruss' stopping problem with random availability. In \textit{IMS Lecture Notes - Monograph Series}, \textbf{35}, 71--82.

\bibitem{Behboodian}   Behboodian, J. (1972). Covariance of Monotone Functions. \textit{Math. Mag.} \textbf{45}, 158--158.

\bibitem{BG} 
Berezovsky, B.A. and Gnedin, A. V. {\it The best choice problem} (in Russian). Nauka, 1984. 

\bibitem{BrowneBunge}  Browne, S. and Bunge, J. (1995). Random record processes and state dependent thinning. \textit{Stochastic Process. Appl.} \textbf{55} 131--142.


\bibitem{Bruss84} Bruss, F.T. (1984)
A Unified Approach to a Class of Best Choice Problems with an Unknown Number of Options. {\it Ann. Probab.} {\bf 12},  882--889. 


\bibitem{BrussCZ} Bruss, F. T.  (1987)
On an optimal selection problem of Cowan and Zabczyk.
{\it J. Appl. Probab.} {\bf 24}, 918--928. 
\bibitem{Odds}
Bruss, F. T. (2000)  Sum the odds to one and stop. {\it Ann. Probab.} {\bf 28}, 1384--1391.

\bibitem{BrussPaindaveine}  Bruss, F. T. and Paindaveine, D. (2000). Selecting a Sequence of Last Successes in Independent Trials. {\it J. Appl. Probab.} {\bf 37}, 389--399.



\bibitem{CRS} Chow, Y.S., Robbins, H. and Siegmund, D. {\it Great Expectations: The Theory of Optimal Stopping},
Houghton Mifflin Co., 1971. 


\bibitem{Ciesielski}  Ciesielski, Z. and Zabczyk, J. (1979). A note on a selection problem. \textit{Banach Center Publ. }\textbf{5} 47--51.

\bibitem{CZ}  Cowan, R. and Zabczyk, J. (1978)  
An optimal selection problem associated with the Poisson process.
{\it Theory Probab. Appl.} {\bf 23}, 584--592.

\bibitem{Crane}
Crane, H. (2016)
The ubiquitous Ewens sampling formula.
{\it Statist. Sci.} {\bf 31}, 1--19.

\bibitem{D1} Derbazi, Z. (2024). Sum the Probabilities to $ m $ and Stop. {\tt arXiv:2406.07283}.

\bibitem{Donohue}  Donohue, C. J. and Birge, J. R. (1996). An upper bound on the expected value of a non-increasing convex function with convex marginal return functions. Oper. Res. Lett. \textbf{18}, 213--221.

	
\bibitem{FergusonBook} Ferguson, T.S. {\it Optimal stopping and applications}, \\{\tt https://www.math.ucla.edu/~tom/Stopping/Contents.html}, ebook, UCLA, 2006.

\bibitem{Fink} Fink, A. M. and Jodeit, Jr., M. (1984). On Chebyshev's other inequality. In \textit{Inequalities in Statistics and Probability}, IMS Lecture Notes - Monograph Series \textbf{5}, 115--120.

\bibitem{Gaver}
Gaver, D. P. (1976) Random record models. {\it J. Appl. Probab.} {\bf 13}, 538--547.


\bibitem{GD2}  Gnedin, A. and Derbazi, Z.   (2022), The Last-Success Problem  with Random Observation Times, {\tt arXiv:2207.05156}
\bibitem{GD1} Gnedin, A.  and Derbazi, Z.   (2022). { Trapping the ultimate success}, {\it Mathematics} {\bf 10}, 158.

\bibitem{Gassmann}  Gassmann, H. and Ziemba, W. T. (1986). A tight upper bound for the expectation of a convex function of a multivariate random variable. In Stochastic Programming 84 Part I, 39--53, Springer.

\bibitem{Hoffman}
Hofmann, G. \textit{A Family of General Record Models}, Doctoral dissertation, The Ohio State University, 1997


\bibitem{Kallenberg} Kallenberg, O. {\it Random Measures: Theory and Applications}. New York, Springer, 2017.

\bibitem{Klenke}  Klenke, A. and Mattner, L. (2010). Stochastic ordering of classical discrete distributions. \textit{Adv. Appl. Probab.} \textbf{42}, 392--410.


\bibitem{Kurushima}  Kurushima, A. and Ano, K. (2003). A Poisson arrival selection problem for Gamma prior intensity with natural number parameter. \textit{Sci. Math. Japon.} \textbf{57}, 217--231.

\bibitem{Madansky}  Madansky, A. (1959). Bounds on the Expectation of a Convex Function of a Multivariate Random Variable. \textit{Ann. Math. Statist.} \textbf{30}, 743--746.


\bibitem{Mitrinovic}  Mitrinovi{\'c}, D. S. (1970). \textit{Analytic Inequalities}. Springer.

\bibitem{Nevzorov} Nevzorov, V. {\it Records: Mathematical Theory} (Lecture 25), Translation of Mathematical Monographs \textbf{194}, AMS, 2001. 

\bibitem{OB} Orsingher, E. and Battaglia, F. (1979). Record values in Poisson and randomized Poisson models,
{\it Publ. Inst. Stat. Univ. Paris} {\bf 24}, fasc. 3-4, 69--78.


\bibitem{Schwalko}  Schwalko, A. and Szajowski, K. (2003). On Bruss' stopping problem with general gain function. \textit{Game Theory Appl.}\textbf{ 9 }, 161--171.



\bibitem{Szajowski}  Szajowski, K. (2007). A game version of the Cowan-Zabczyk-Bruss' problem. \textit{Statist. Probab. Lett.} \textbf{77} 1683--1689.

\bibitem{Szajowski15}  Szajowski, K. J. (2015). An apartment problem. \textit{Math. Appl.} \textbf{42}, 193--206.

\bibitem{Yasuda}  Yasuda, M. (1984). Asymptotic results for the best-choice problem with a random number of objects. \textit{J. Appl. Probab.} \textbf{21}, 521--536.

\end{thebibliography}
\end{document}